\tikzset{
	confetti/.style={
		rectangle,
		draw=black, very thick,
		minimum height=4em,
		minimum width=6em,
		inner sep=1pt,
		text centered,
	},
	fletxa/.style={
		draw=black, very thick, ->
	},
}
\theoremstyle{definition}
\newtheorem{theorem}{Theorem}[section]
\newtheorem{prop}[theorem]{Proposition}
\newtheorem{lemma}[theorem]{Lemma}
\newtheorem{definition}[theorem]{Definition}
\newcommand{\R}{\mathbb{R}}
\newcommand{\C}{\mathbb{C}}
\newcommand{\K}{\mathbb{K}}
\newcommand{\e}{\varepsilon}
\newcommand{\eps}{\varepsilon}
\newcommand{\ran}{\text{ran}}
\DeclareMathOperator{\re}{Re}
\DeclareMathOperator{\Id}{Id}
\renewcommand{\leq}{\leqslant}
\renewcommand{\geq}{\geqslant}
\renewcommand{\geq}{\geqslant}
\begin{document}

	\title{The Bishop-Phelps-Bollob\'as properties in complex Hilbert spaces}

	\author[Choi]{Yun Sung Choi}
\address[Choi]{Department of Mathematics, POSTECH, Pohang 790-784, Republic of Korea}
\email{\texttt{mathchoi@postech.ac.kr}}

	\author[Dantas]{Sheldon Dantas}
	\address[Dantas]{Department of Mathematics, Faculty of Electrical Engineering, Czech Technical University in Prague, Technick\'a 2, 166 27 Prague 6, Czech Republic  \newline
\href{http://orcid.org/0000-0001-8117-3760}{ORCID: \texttt{0000-0001-8117-3760}  } }
\email{\texttt{gildashe@fel.cvut.cz}}

	\author[Jung]{Mingu Jung}
	\address[Jung]{Department of Mathematics, POSTECH, Pohang 790-784, Republic of Korea \newline
	\href{http://orcid.org/0000-0000-0000-0000}{ORCID: \texttt{0000-0003-2240-2855}  }}
\email{\texttt{jmingoo@postech.ac.kr}}

	\begin{abstract} In this paper, we consider the {\it Bishop-Phelps-Bollob\'as point property} for various classes of operators on complex Hilbert spaces, which is a stronger property than the Bishop-Phelps-Bollob\'{a}s property. We also deal with analogous problem by replacing the norm of an operator with its numerical radius.
	\end{abstract}

	\date{\today}
	
\thanks{The first author was supported by Basic Science Research Program through the National Research Foundation of Korea (NRF) funded by the Ministry of Education (NRF-2015R1D1A1A09059788 and NRF-2018R1A4A1023590). The second author was supported by the project OPVVV CAAS CZ.02.1.01/0.0/0.0/16\_019/0000778, Centrum pokro\v{c}il\'ych aplikovan\'ych p\v{r}\'irodn\'ich v\v{e}d (Center for Advanced Applied Science), by Pohang Mathematics Institute (PMI), POSTECH, Korea and by NRF funded by the Ministry of Education, Science and Technology (NRF-2015R1D1A1A09059788). The third author was supported by NRF (NRF-2015R1D1A1A09059788).}
	
	\subjclass[2010]{Primary 46B04; Secondary  46B07, 46B20}
	\keywords{Hilbert space; norm attaining operators; Bishop-Phelps-Bollob\'{a}s property}

	\maketitle
	
	\thispagestyle{plain}
	

	\section{Introduction} \label{int}
	
The study of the denseness of norm-attaining operators between Banach spaces was motivated by the celebrated Bishop-Phelps theorem \cite{BP} published in 1961. J. Lindenstrauss \cite{Lin} showed in 1963 not only that such a denseness does not hold in general, but also that if a Banach space $X$ is reflexive, then it holds for operators from $X$ into an arbitrary Banach space $Y$. After that, this result was improved by J. Bourgain \cite{Bou}. He showed that if $X$ is a Banach space with the Radon-Nikod\'{y}m property, then every bounded (compact) operator $T$ from $X$ into an arbitrary Banach space $Y$ can be approximated by norm-attaining (compact) operators $T+K$ with a finite rank operator $K$. A few years later, C. Stegall observed that the above $K$ can be chosen to be a rank one operator \cite{Ste}. 
There is a vast literature about this topic and we suggest the reader the survey paper \cite{Aco}. 

On the other hand, B. Bollob\'{a}s \cite{Bol} refined in 1970 the Bishop-Phelps theorem quantitatively by showing that both functionals and points where they almost attain the norm can be approximated by norm-attaining functionals and points where they do attain the norm. In 2008, M. Acosta, R. Aron, D. Garc\'ia, and M. Maestre began studying this theorem for operators between Banach spaces $X$ and $Y$, and introduced the Bishop-Phelps-Bollob\'as property (see \cite[Definition 1.1]{AAGM}): we say that the pair $(X, Y)$ has the {\it Bishop-Phelps-Bollob\'as property} (BPBp, for short) if given $\e > 0$, there is $\eta(\e) > 0$ such that whenever $T \in \mathcal{L}(X, Y)$ with $\|T\| =1$ and $x_0 \in S_X$ satisfy
$\|T x_0\| > 1 - \eta(\e),$
there are $S \in \mathcal{L}(X, Y)$ with $\|S\| = 1$ and $x_1 \in S_X$ such that
\begin{equation*} 
\|S x_1\| = 1, ~~\|x_1 - x_0\| < \e, ~~ \mbox{and} ~~\|S - T\| < \e.
\end{equation*} 
Here, $\mathcal{L}(X, Y)$ denotes the Banach space of all bounded linear operators from $X$ into $Y$ and $S_X$ the unit sphere of $X$. When $X=Y$, $\mathcal{L}(X,Y)$ is abbreviated to $\mathcal{L}(X)$ and we simply say that $X$ has the BPBp when the pair $(X, X)$ has the BPBp.
With this definition, the refinement given by B. Bollob\'{a}s \cite{Bol} means simply that the pair $(X, \K)$ has the BPBp for every Banach space $X$, where $\K$ is either $\R$ or $\C$. Although there has been an extensive research on this property (see, for example, \cite{AcoFakSole, AMS, DGMM, KL}), we would like to focus on the case when $X$ is a complex Hilbert space $H$ by considering classical operators on $H$.

It was showed in 2012 by L. Cheng and Y. Dong \cite{CD} that the complex Hilbert space $H$ satisfies the BPBp for normal operators, that is, given $0< \e < 1/2,$ a normal operator $T\in \mathcal{L}(H)$ with $\|T\| = 1$ and $x_0 \in S_H$ such that $\|Tx_0\|> 1-\e$, there exist a normal operator $S\in \mathcal{L}(H)$ with $\|S\| = 1$ and $x_1\in S_H$ such that $\|S x_1\|=1$, $\|x_1 - x_0\| \leq \sqrt{2\e} + \sqrt[4]{2\e}$, and $\|S-T\| < \sqrt{2\e}$. To consider $H$ as a complex space is essential for that proof since spectral theory is used strongly. The analogous result for self-adjoint operators was obtained in 2014 by D. Garc\'{i}a, H.J. Lee, and M. Maestre \cite{GLM}. They also proved that $H$  has the  BPBp for Schatten-von Neumann operators even with respect to the Schatten $p$-norm $\sigma_p( \cdot )$. Moreover, $H$ satisfies the BPBp for compact operators as a particular case of a more general result: if $X$ is uniformly convex, then the pair $(X,Y)$ has the BPBp for compact operators for every $Y$ \cite{DGMM}.

In this paper, we study a stronger property, so-called the {\it Bishop-Phelps-Bollob\'as point property} for operators defined on complex Hilbert spaces such as positive, self-adjoint, anti-symmetric, unitary, compact, normal, and Schatten-von Neumann operators as well as some intersections between some of these classes.  We say that the pair $(X, Y)$ satisfies the {\it Bishop-Phelps-Bollob\'as point property} (BPBpp, for short) if given $\e > 0$, there is $\eta(\e) > 0$ such that whenever $T \in \mathcal{L}(X, Y)$ with $\|T\| = 1$ and $x_0 \in S_X$ satisfy $\|T x_0 \| > 1 - \eta(\e),$
there is $S \in \mathcal{L}(X, Y)$ with $\|S\| = 1$ such that 
$\|S x_0\| = 1 $ and $\|S - T\| < \e.$ 
This property was introduced in \cite{DKL} (see also \cite{DKKLM} for more recent results).

In parallel with the study of denseness of norm-attaining operators, a lot of attention was given also to the study of the denseness of numerical radius attaining operators. O. Toeplitz \cite{T} defined in 1918 the numerical range for matrices  which could be naturally extended for bounded operators on the Hilbert space $H$. The numerical range of $T$ is defined by $W(T) = \{ \langle Tx, x \rangle: x \in S_H \}$ and its numerical radius by $\nu(T) = \sup \{ |\lambda|: \lambda \in W(T)\} = \sup \{ | \langle Tx, x \rangle|: x \in S_H \}$, where the symbol $\langle \ , \ \rangle$ stands for the inner product on $H$.
Note that $\nu$ is a seminorm on $\mathcal{L}(H)$ satisfying $\nu(T) \leq \|T\|$ for every $T \in \mathcal{L}(H)$. It is well-known that for a complex Hilbert space $H$ with dimension greater than $1$, we always have $\|T\| \leq 2 \nu(T)$  for every $T \in \mathcal{L}(H)$ (see \cite{H}, pg. 114), which, on the other hand, it is not true for real Hilbert spaces. Recall that an operator on $H$ attains the numerical radius if there is $x_0 \in S_H$ such that $|\langle Tx_0, x_0 \rangle| = \nu(T)$. These concepts can be extended for a general Banach space (see \cite{B, L}). For instance, the numerical radius of an operator $T \in \mathcal{L}(X)$ is defined by $\nu (T) = \sup\{ |x^* (Tx)| : x \in S_X, x^* \in S_{X^*}, x^* (x) =1\}$. We refer the reader to the classical books \cite{BD1, BD2} for a complete background on the numerical range theory.

B. Sims showed that every self-adjoint operator on a Hilbert space can be approximated by self-adjoint operators each of which attains the numerical radius \cite[Theorem 3.9]{Sims} and I. Berg and B. Sims proved the denseness of numerical radius attaining operators on a uniformly convex space \cite{BS}. Also, many Banach spaces, such as $c_0, \ell_1, C(K)$ (where $K$ is a compact Hausdorff space), $L_1 (\mu)$, uniformly smooth Banach spaces, and Banach spaces with the Radon-Nikod\'{y}m property were shown to satisfy the property that the set of the numerical radius attaining operators is dense in the space of all bounded linear operators (see \cite{A, AP, C1, C2, C3}). 

Motivated by the BPBp, some authors studied the Bishop-Phelps-Bollob\'as property for numerical radius (see, for instance, \cite{F, GK,KLM1}) by considering the numerical radius of an operator instead of its norm. We say that a Banach space $X$ has the {\it Bishop-Phelps-Bollob\'as property for numerical radius} (the BPBp-$\nu$, for short) if given $\e > 0$, then there is $\eta(\e) > 0$ such that whenever $T \in \mathcal{L}(X)$ with $\nu(T) = 1$ and $(x, x^*) \in S_X \times S_{X^*}$ with $x^*(x) = 1$ satisfy
\begin{equation*}
|x^*(T x)| >  1 - \eta(\e),
\end{equation*}	
there exist $S \in \mathcal{L}(X)$ with $\nu(S) = 1$ and $(z, z^*) \in S_X \times S_{X^*}$ with $z^*(z) = 1$ such that
\begin{equation*}
|z^*(S z )| = 1, \ \ \  \|z^* - x^*\| < \e, \ \ \ \|z - x\| < \e, \ \ \ \mbox{and} \ \ \  \|S - T\| < \e.
\end{equation*}
Among other results, a uniformly convex and uniformly smooth complex Banach space satisfies the BPBp-$\nu$ (see \cite[Corollary 7]{KLM1}). In particular, so do complex Hilbert spaces and complex $L_p$-spaces with $1 < p < \infty$. Actually, a real Hilbert space and an $L_1(\mu)$ space for every measure $\mu$ also satisfy the BPBp-$\nu$ (see \cite[Theorem 3.2]{KLM} and \cite[Theorem 9]{KLM1} (or \cite[Theorem 9]{F}), respectively). However, every separable infinite dimensional Banach space can be renormed to fail the BPBp-$\nu$ (\cite[Theorem 17]{KLM1}). 

Similarly to the BPBpp, we are interested in studying a stronger property than the BPBp-$\nu$ for classical operators on complex Hilbert spaces. To be more precise, we introduce the Bishop-Phelps-Bollob\'as point property for numerical radius: we say that a Banach space $X$ has the {\it Bishop-Phelps-Bollob\'as point property for numerical radius} (the BPBpp-$\nu$, for short) if given $\e > 0$, there is $\eta(\e) > 0$ such that whenever $T \in \mathcal{L}(X)$ with $\nu(T) = 1$ and $(x, x^*) \in S_X \times S_{X^*}$ with $x^*(x) = 1$ satisfy $|x^*(T x )| >  1 - \eta(\e)$, there is a new operator $S \in \mathcal{L}(X)$ with $\nu(S) = 1$ such that 
\begin{equation*}
|x^*(S x)| = 1 \ \ \ \mbox{and} \ \ \ \|S - T\| < \e.
\end{equation*}
It was recently discovered that if the numerical index of a Banach space $X$ (the numerical index of $X$ is defined by $n(X) = \inf \{ \nu(T): T \in \mathcal{L}(X), \|T\| = 1 \}$) is one and $X$ satisfies the BPBpp-$\nu$, then X must be one-dimensional \cite{DKLM}. On the other hand, as we have mentioned before, $L_1(\mu)$ satisfies the BPBp-$\nu$ for every measure $\mu$ (see \cite[Theorem 9]{KLM1}). Thus, since the numerical index of $L_1(\mu))$ is one, $L_1(\mu)$ is an example of a Banach space which has the BPBp-$\nu$ but not the BPBpp-$\nu$.

Let us now give the contents of this paper. In Section 2, we recall some properties of a resolution of the identity on a complex Hilbert space, and show a technical result which allows us to transfer the BPBp-$\nu$ (resp. the BPBp) to the BPBpp-$\nu$ (resp. the BPBpp). In Section 3, we study the Bishop-Phelps-Bollob\'as point property for some classes of operators defined on a complex Hilbert space as self-adjoint, anti-symmetric, unitary, normal, compact, and Schatten-von Neumann. As a consequence of these results and their proofs, we get the analogous for positive, positive Schatten-von Neumann, compact positive, self-adjoint Schatten-von Neumann, and normal Schatten-von Neumann operators. Finally, in Section 4, we consider similar problems for the Bishop-Phelps-Bollob\'as point property for numerical radius.

\section{Preliminaries}

In this section we show some technical results, which we need in discussing the problems that appear in sections 3 and 4. We begin with giving the definition of the BPBp (and BPBp-$\nu$) for a class of operators $\mathcal{A}$ and recall the definition and some properties of the Schatten-von Neumann classes and some basic notation and results from spectral measure. After this, we apply the fact that Hilbert spaces have transitive norms in order to transfer the BPBp-$\nu$ (resp. the BPBp) to the BPBpp-$\nu$ (resp. the BPBpp).

The definition of the BPBp (resp. the BPBpp) for compact operators already appeared in \cite[Definition 1.4]{DGMM} (resp. \cite[Definition 5.1]{DKKLM}) and the definition of BPBp-$\nu$ for $\mathcal{A} \subset \mathcal{L}(X)$ appeared in \cite[Definition 2.1]{AcoFakSole}. 
Next, we state, for a Hilbert space $H$, the definitions of the BPBp (and BPBpp) for $\mathcal{A} \subset \mathcal{L}(H)$ and the BPBp-$\nu$ (and BPBpp-$\nu$) for $\mathcal{A} \subset \mathcal{L}(H)$ that we are working with in this paper.

\begin{definition} \label{def} Let $H$ be a Hilbert space and $\mathcal{A} \subset \mathcal{L}(H)$. 
	
	\begin{enumerate}[label=(\alph*)] 
		\item We say that $H$ has the BPBp for $\mathcal{A}$ if given $\e > 0$, there is $\eta(\e) > 0$ such that whenever $T \in \mathcal{A}$ with $\| T\|  = 1$ and $x_0 \in S_H$ satisfy 
		\begin{equation*}
		\| T x_0\| > 1 - \eta(\e),
		 \end{equation*}
		there are $S \in \mathcal{A}$ with $\| S \|  = 1$ and $x_1 \in S_H$ such that 
		\begin{equation*}
		\|S x_1\| = 1, \ \ \ \|x_1 - x_0\| < \e, \ \ \ \mbox{and} \ \ \  \|S - T\| < \e. 
		\end{equation*}
		If $x_1 = x_0$, then we say $H$ has the BPBpp for $\mathcal{A}$.
		\vspace{0.1cm}
		\item We say that $H$ has the BPBp-$\nu$ for $\mathcal{A}$ if given $\e > 0$, there is $\eta(\e) > 0$ such that whenever $T \in \mathcal{A}$ with $\nu(T) = 1$ and $x_0 \in S_H$ satisfy 
		\begin{equation*}
		|\langle T x_0, x_0 \rangle| > 1 - \eta(\e),
		\end{equation*}
		 there are $S \in \mathcal{A}$ with $\nu(S) = 1$ and $x_1 \in S_H$ such that 
		\begin{equation*}
		| \langle S x_1, x_1 \rangle| = 1, \ \ \ \|x_1-x_0\|<\e, \ \ \ \mbox{and} \ \ \  \|S - T\| < \e. 
		\end{equation*}
		If $x_1 = x_0$, then we say $H$ has the BPBpp-$\nu$ for $\mathcal{A}$.
	\end{enumerate}
\end{definition}

Let $H$ be a {\bf complex} Hilbert space. For a compact operator $T \neq 0$ on $H$, the operator $|T|$ has the spectral representation 
\begin{equation} \label{spectral}
|T| = \sum_{j=1}^{n_0} \lambda_j \langle \cdot, x_j \rangle x_j, 
\end{equation} 
where $n_0 \in \mathbb{N} \cup \{\infty\}$, $\{\lambda_j\}$ is the sequence of non-zero eigenvalues of $|T|$ (arranged in decreasing order and counted according to their multiplicities), and $\{x_j\}$ is the corresponding orthonormal sequence of eigenvectors. For $1 \leq p < \infty$, the {\it Schatten-von Neumann class} $S_p (H)$ consists of all compact operators $T$ with
\begin{equation*} 
\sigma_p (T) =  \left(\sum_{j=1}^{\infty} \lambda_j^p \right)^{1/p} < \infty.
\end{equation*}
$S_p (H)$ is a Banach space endowed with the Schatten $p$-norm $\sigma_p( \cdot )$. The elements of $S_p(H)$ are called {\it Schatten-von Neumann operators}. We define $S_{\infty}(H)$ to be simply $\mathcal{L}(H)$. It is well-known that the Schatten $p$-norm has the monotonicity property: for $1 \leq p \leq p' \leq \infty$,
\begin{equation}\label{monotonicity}
\|T\|=\sigma_{\infty} (T) \leq \sigma_{p'} (T) \leq \sigma_p (T) \leq \sigma_1 (T).
\end{equation}
In Theorem \ref{propsec1}, we prove not only that $H$ has the BPBpp for Schatten-von Neumann operators
but also that a given Schatten-von Neumann operator can be approximated by some operator of the same class in the Schatten $p$-norm (see \cite[Theorem 4.1]{GLM}). To do so, we need the following generalization of the H\"{o}lder inequality. Suppose that $1 \leq r, s,t \leq \infty$, $t^{-1} = r^{-1} + s^{-1}$, $R \in S_r (H)$, and $S \in S_s (H)$. Then $RS \in S_t (H)$ and $\sigma_t (RS) \leq \sigma_r (R) \sigma_s (S)$ (see, for example, \cite[Theorem 2.3.10]{JRR}).

Let $\mathfrak{M}$ be a $\sigma$-algebra in a set $\Omega$. A \emph{resolution of the identity} (on $\mathfrak{M}$) is a mapping $E : \mathfrak{M} \rightarrow \mathcal{L}(H)$
with the following properties: 
\begin{enumerate}
\item $E(\emptyset) =0, E(\Omega) = \Id_H$.
\item Each $E(\omega)$ is a self-adjoint projection.
\item $E(\omega' \cap \omega'') = E(\omega')E(\omega'')$.
\item If $\omega' \cap \omega'' =\emptyset$, then $E(\omega' \cup \omega'') = E(\omega') + E(\omega'')$. 
\item For every $x \in H$ and $y \in H$, the set function $E_{x,y} (\omega) = \langle E(\omega)x, y\rangle$ is a complex measure on $\mathfrak{M}$
\end{enumerate}
(see, for example, \cite[Definition 12.17]{RUD}). Recall that if $T \in \mathcal{L}(H)$ is normal, then there exists a unique resolution of the identity $E$ on the Borel subsets of $\sigma(T)$, which satisfies 
\[
T = \int_{\sigma(T)} z \, dE(z).
\]
Furthermore, every projection $E(\omega)$ commutes with every $S \in \mathcal{L}(H)$ which commutes with $T$. Moreover, with the same hypothesis, if  $f : \sigma(T) \rightarrow \mathbb{C}$ is a bounded Borel function, $\delta >0$, $B(\delta)$ denotes the closed disk centered at the origin with radius $r>0$ in $\mathbb{C}$,
\begin{equation*} 
	N_1 = \int_{\sigma(T)\setminus B(\delta)} f(z) \, dE(z) \ \ \ \mbox{and} \ \ \ 
	N_2 = \int_{\sigma(T) \cap B(\delta)} z\,dE(z),
\end{equation*}
then
	\begin{enumerate}
		\item $\ran E(\sigma(T)\setminus B(\delta) \subset \overline {\ran T}$.
		\item $\ran N_1 \subset \ran E(\sigma(T) \setminus B(\delta))$ and $\ker N_1 \supset \ran E(\sigma(T)\cap B(\delta))$.
		In particular, if $|f(z)|>0$ for all $z \in \sigma(T) \setminus B(\delta)$, then  $\overline{ \ran N_1} = \ran E(\sigma(T)\setminus B(\delta))$.		
		\item $\ran N_2 \subset \ran E(\sigma(T)\cap B(\delta))$ and $\ker N_2 \supset \ran E(\sigma(T)\setminus B(\delta)).$ 
	\end{enumerate} 
This can be found, for example, in \cite[Lemma 2.4]{CD}. Also, we denote by $f(T)$ the operator
\begin{equation*}
\int_{\sigma(T)} f(z) E(z),	
\end{equation*}
where $f$ is a bounded Borel function on $\sigma(T)$. Moreover, we need the following result.

\begin{lemma}\label{lem2}\cite[Proposition 4.1]{CON}.
	 If $T$ is a normal operator and $T = \int z \, dE(z)$, then $T$ is compact if and only if for every $\eps >0$, $E (\{ z : |z| > \eps \})$ has finite rank. 
	\end{lemma}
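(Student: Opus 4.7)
The plan is to prove both implications using the spectral functional calculus. For a bounded Borel function $f$ on $\sigma(T)$, the operator $f(T) = \int f(z)\,dE(z)$ is well defined and the assignment $f \mapsto f(T)$ is a $*$-homomorphism; in particular $(fg)(T) = f(T)g(T)$. Both directions of the equivalence will follow once this multiplicative property is applied to a suitably truncated inverse of the identity function $z \mapsto z$. For $\varepsilon > 0$, write $\omega_\varepsilon = \{z \in \sigma(T) : |z| > \varepsilon\}$.

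For the forward direction, assume $T$ is compact and fix $\varepsilon>0$. Define the bounded Borel function
\begin{equation*}
g_\varepsilon(z) = \begin{cases} 1/z & \text{if } |z|>\varepsilon,\\ 0 & \text{if } |z|\leq \varepsilon,\end{cases}
\end{equation*}
which is bounded by $1/\varepsilon$. Then $(z\,g_\varepsilon(z))(T) = \chi_{\omega_\varepsilon}(T) = E(\omega_\varepsilon)$ by the multiplicative property of the functional calculus. Thus $E(\omega_\varepsilon) = T\, g_\varepsilon(T)$ is the composition of the compact operator $T$ with the bounded operator $g_\varepsilon(T)$, and hence compact. Since $E(\omega_\varepsilon)$ is also a self-adjoint projection, its range must be finite dimensional, which is exactly what we wanted.

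For the converse, assume $E(\omega_{1/n})$ has finite rank for every $n \in \mathbb{N}$, and define
\begin{equation*}
T_n = \int_{\sigma(T)} z\,\chi_{\omega_{1/n}}(z)\, dE(z).
\end{equation*}
The multiplicativity of the calculus gives $T_n = T \cdot E(\omega_{1/n})$, so $\operatorname{ran} T_n \subset \operatorname{ran} E(\omega_{1/n})$ is finite dimensional, hence $T_n$ is a finite rank operator. On the other hand, $T - T_n = \int_{\sigma(T)} z\,\chi_{\sigma(T)\setminus \omega_{1/n}}(z)\, dE(z)$, whose norm is bounded by the supremum of $|z|\chi_{\sigma(T)\setminus\omega_{1/n}}(z)$, i.e.\ $\|T-T_n\|\leq 1/n$. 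Therefore $T$ is the operator-norm limit of the finite rank operators $T_n$ and so compact.

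The only genuinely delicate step is the identity $T\, g_\varepsilon(T) = E(\omega_\varepsilon)$ (and analogously $T \cdot E(\omega_{1/n}) = T_n$). It is the part where one must invoke the multiplicativity of the spectral integral for bounded Borel functions rather than just its additivity and normality; once that algebraic identity is granted, the rest of the argument reduces to the elementary observations that a compact projection is finite rank and that a norm-limit of finite rank operators is compact.
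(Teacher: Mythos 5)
Your proof is correct. The paper does not prove this lemma at all --- it is quoted verbatim from \cite[Proposition 4.1]{CON} --- so there is no in-paper argument to compare against; your functional-calculus argument (writing $E(\omega_\varepsilon)=T\,g_\varepsilon(T)$ to see that a compact $T$ forces the projection to be a compact, hence finite-rank, projection, and conversely exhibiting $T$ as a norm limit of the finite-rank truncations $T\,E(\omega_{1/n})$) is the standard textbook proof of this fact. The one step worth making explicit is that $\operatorname{ran}T_n\subset\operatorname{ran}E(\omega_{1/n})$ uses that $E(\omega_{1/n})$ commutes with $T$, i.e.\ $T_n=E(\omega_{1/n})T$ as well as $T\,E(\omega_{1/n})$; this is again immediate from multiplicativity of the calculus, so the argument is complete.
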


In order to prove Theorem \ref{maintheorem}, we need the following two lemmas. The first one says the well-known fact that Hilbert spaces have transitive norms. If $T \in \mathcal{L}(H)$, we denote by $T^*$ the adjoint operator of $T$.

\begin{lemma}{(\cite[Lemma 2.2]{AMS})} \label{maria} Let $H$ be a (real or complex) Hilbert space. Given $x$ and $y$ in $S_H$, there is a surjective isometry $R \in \mathcal{L}(H)$ such that 
\begin{equation*}	
R x = y \ \ \ \mbox{and} \ \ \  \|R - \Id_H\| = \|x - y\|.
\end{equation*}
\end{lemma}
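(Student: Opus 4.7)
The case $x = y$ is immediate with the choice $R = \Id_H$, so from now on I would assume $x \neq y$. The plan is to localize the problem to the (at most two complex-dimensional) subspace $V = \spann_\C\{x,y\}$ and to build $R$ as $U \oplus \Id_{V^\perp}$ for a suitable unitary $U$ on $V$. Since $R - \Id_H$ vanishes on $V^\perp$, its operator norm coincides with $\|U - \Id_V\|$, so the entire question reduces to a finite-dimensional computation.

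If $y = \lambda x$ for some unimodular $\lambda \in \C$, then $V$ is one-dimensional and I would simply let $U$ be multiplication by $\lambda$; this gives $Ux = y$ and $\|U - \Id_V\| = |\lambda - 1| = \|x - y\|$. Otherwise, I would extend $x$ to an orthonormal basis $\{x, e_2\}$ of $V$, where $e_2$ is the normalization of $y - \langle y, x\rangle x$, so that $y = \alpha x + \beta e_2$ with $|\alpha|^2 + |\beta|^2 = 1$ and $\beta \neq 0$. In this basis the natural candidate is
\[
U = \begin{pmatrix} \alpha & -\bar\beta \\ \beta & \bar\alpha \end{pmatrix},
\]
which is manifestly unitary and satisfies $Ux = y$.

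It then remains to verify $\|U - \Id_V\| = \|x - y\|$. Here the key identity is $U + U^* = 2\re(\alpha)\,\Id_V$, which yields
\[
(U - \Id_V)^*(U - \Id_V) = 2\Id_V - (U + U^*) = 2(1 - \re(\alpha))\,\Id_V,
\]
so $\|U - \Id_V\|^2 = 2(1 - \re(\alpha))$. This matches $\|x - y\|^2 = |1 - \alpha|^2 + |\beta|^2 = 2 - 2\re(\alpha)$, and setting $R = U \oplus \Id_{V^\perp}$ then delivers a surjective isometry with the required properties. I do not anticipate any genuine obstacle; the only minor subtlety is remembering to handle the degenerate case in which $x$ and $y$ are linearly dependent separately, as above.
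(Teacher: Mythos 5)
Your argument is correct. Note that the paper does not actually prove this lemma; it is imported verbatim as \cite[Lemma 2.2]{AMS}, so there is no internal proof to compare against, but your direct construction is a complete and standard way to establish it. All the steps check out: with $\alpha=\langle y,x\rangle$ and $\beta=\|y-\langle y,x\rangle x\|>0$ the matrix $U$ is unitary and sends $x$ to $y$; the identity $U+U^*=2\re(\alpha)\Id_V$ holds because the off-diagonal entries of $U$ and $U^*$ cancel; the $C^*$-identity then gives $\|U-\Id_V\|^2=\|(U-\Id_V)^*(U-\Id_V)\|=2(1-\re\alpha)=\|x-y\|^2$; and the block structure of $R-\Id_H$ relative to $H=V\oplus V^\perp$ gives $\|R-\Id_H\|=\|U-\Id_V\|$. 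The only cosmetic point is that the lemma is stated for real \emph{or} complex $H$ while you work with $\spann_{\C}$ and unimodular $\lambda\in\C$; in the real case one takes $V=\spann_{\R}\{x,y\}$, the degenerate case is $y=\pm x$, and the same $2\times 2$ rotation matrix (now with real $\alpha,\beta$) runs through the identical computation, so this is a one-line adaptation rather than a gap.
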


\begin{lemma} \label{ref} Let $H$ be a complex Hilbert space. Given $x, y \in S_H$, consider the surjective isometry $R \in \mathcal{L}(H)$ from Lemma \ref{maria}. Define $\mathcal{R}_{x, y}: \mathcal{L}(H) \longrightarrow \mathcal{L}(H)$ by $\mathcal{R}_{x, y} (T) := R^* \circ T \circ R$ for $T \in \mathcal{L}(H)$. Then, for every $T \in \mathcal{L}(H)$, we have
\begin{itemize}
\item[(i)] $\nu(T) = \nu (\mathcal{R}_{x, y}(T))$ and $\|T\| = \|\mathcal{R}_{x, y}(T)\|$.
\item[(ii)] $\langle Ty, y \rangle = \langle \mathcal{R}_{x, y}(T)(x), x \rangle$ and $\|T(y)\| = \|\mathcal{R}_{x, y}(T)(x)\|$.
\item[(iii)] $\| \mathcal{R}_{x, y}(T) - T\| \leq 2 \|x - y\| \|T\|$.
\end{itemize}
\end{lemma}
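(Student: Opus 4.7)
The key observation is that the surjective isometry $R$ provided by Lemma \ref{maria} is a unitary operator on the complex Hilbert space $H$, so $R^{*}=R^{-1}$ is itself a surjective isometry. Consequently $\mathcal{R}_{x,y}(T)=R^{*}TR$ is just the unitary conjugate of $T$, and the three items essentially encode familiar invariance properties of such conjugations.

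For (i), I would note that for any $u\in S_H$,
\[
\|\mathcal{R}_{x,y}(T)u\|=\|R^{*}TRu\|=\|TRu\|
\qquad\text{and}\qquad
\langle \mathcal{R}_{x,y}(T)u,u\rangle=\langle TRu,Ru\rangle,
\]
because $R^{*}$ preserves norms and inner products in the first slot after the obvious adjoint move. Since $R$ is a surjective isometry, $Ru$ ranges over $S_H$ as $u$ does, so taking suprema yields $\|\mathcal{R}_{x,y}(T)\|=\|T\|$ and $\nu(\mathcal{R}_{x,y}(T))=\nu(T)$. For (ii), I apply the same two identities to the particular vector $u=x$ and use $Rx=y$ to obtain $\langle \mathcal{R}_{x,y}(T)x,x\rangle=\langle Ty,y\rangle$ and $\|\mathcal{R}_{x,y}(T)x\|=\|Ty\|$.

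For (iii), the plan is to use the telescoping decomposition
\[
\mathcal{R}_{x,y}(T)-T=R^{*}TR-T=R^{*}T(R-\Id_H)+(R^{*}-\Id_H)T,
\]
whence, since $\|R^{*}\|=1$,
\[
\|\mathcal{R}_{x,y}(T)-T\|\le \|T\|\,\|R-\Id_H\|+\|R^{*}-\Id_H\|\,\|T\|.
\]
Finally, $\|R^{*}-\Id_H\|=\|(R-\Id_H)^{*}\|=\|R-\Id_H\|=\|x-y\|$, the last equality coming from Lemma \ref{maria}, which produces the factor $2$ in the estimate $\|\mathcal{R}_{x,y}(T)-T\|\le 2\|x-y\|\,\|T\|$.

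There is no real obstacle here: the lemma is essentially a bookkeeping exercise once one recognises that the surjectivity of $R$ is precisely what upgrades $R^{*}$ to an isometry (indeed to $R^{-1}$), so that unitary conjugation preserves all the relevant Hilbert-space quantities. The only point that requires mild care is verifying that the numerical range is genuinely preserved, which amounts to the substitution $v=Ru$ mapping $S_H$ onto itself.
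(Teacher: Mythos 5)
Your proof is correct and follows essentially the same route as the paper: (ii) by the same adjoint move $\langle R^{*}TRx,x\rangle=\langle TRx,Rx\rangle$ with $Rx=y$, and (iii) by a two-term telescoping of $R^{*}TR-T$ combined with $\|R^{*}-\Id_H\|=\|R-\Id_H\|=\|x-y\|$ (the paper splits through the intermediate term $T\circ R$ rather than $R^{*}T$, an immaterial difference). The paper simply declares (i) ``clear''; your substitution argument $v=Ru$ is the standard justification.
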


\begin{proof} (i) is clear, because $R$ is a surjective isometry. For (ii), note that
	\begin{equation*}
\langle \mathcal{R}_{x, y}(T)(x), x \rangle = \langle (R^* \circ T \circ R)(x), x \rangle = \langle (T \circ R)(x), R x \rangle = \langle T y , y \rangle
	\end{equation*}
and $\| \mathcal{R}_{x, y}(T)(x)\| = \|(T \circ R)(x)\| = \|T y \|$. Finally, (iii) holds since
\begin{eqnarray*}
\| \mathcal{R}_{x, y}(T) - T\| &=& \| R^* \circ T \circ R -T\| \\
&\leq& \| R^* \circ T \circ R - T \circ R\| + \|T \circ R - T\| \\
&\leq& \| R^* - Id_H\| \|T \circ R\| + \|R - Id_H\| \|T\| \\
&=& \|x - y\| \|T\| + \|x - y\| \|T\|.	
\end{eqnarray*}	
	
\end{proof}

Now we are ready to prove the desired theorem that we will use in the next sections. 

\begin{theorem} \label{maintheorem} Let $H$ be a complex Hilbert space. Let $\mathcal{A} \subset \mathcal{L}(H)$ be such that $H$ has the BPBp-$\nu$ (resp. the BPBp) for $\mathcal{A}$ and suppose that $\mathcal{R}_{x, y} \mathcal{A} \subset \mathcal{A}$ for every $x, y \in S_H$, where $\mathcal{R}_{x, y}$ is defined as in Lemma \ref{ref}. Then, $H$ has the BPBpp-$\nu$ (resp. the BPBpp) for $\mathcal{A}$.	
\end{theorem}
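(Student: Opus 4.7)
The plan is to start with the BPBp(-$\nu$) hypothesis on $\mathcal{A}$ applied with a suitably shrunken $\e$, and then use the isometry $R$ provided by Lemma \ref{maria} (together with its induced conjugation $\mathcal{R}_{x_0,x_1}$ from Lemma \ref{ref}) to "rotate" the new attaining vector $x_1$ back onto the original point $x_0$, paying a controlled price in operator norm. The invariance assumption $\mathcal{R}_{x,y}\mathcal{A}\subset \mathcal{A}$ is exactly what keeps the rotated operator inside the class.

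Concretely, I would proceed as follows. Given $\e>0$, define $\eta(\e):=\eta_0(\e/3)$ in the BPBp case and $\eta(\e):=\eta_0(\e/5)$ in the BPBp-$\nu$ case, where $\eta_0$ is the modulus furnished by the BPBp (resp.\ BPBp-$\nu$) for $\mathcal{A}$. Take $T\in\mathcal{A}$ (with $\|T\|=1$ or $\nu(T)=1$) and $x_0\in S_H$ satisfying the almost-attaining condition with $\eta(\e)$. Apply the BPBp(-$\nu$) hypothesis to obtain $S\in\mathcal{A}$ and $x_1\in S_H$ that attain the norm (resp.\ numerical radius) with $\|x_1-x_0\|<\e/3$ (resp.\ $<\e/5$) and $\|S-T\|<\e/3$ (resp.\ $<\e/5$). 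Then set $S':=\mathcal{R}_{x_0,x_1}(S)$, which lies in $\mathcal{A}$ by the invariance hypothesis.

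Lemma \ref{ref}(i) gives $\|S'\|=\|S\|=1$ (or $\nu(S')=\nu(S)=1$), and Lemma \ref{ref}(ii) transports the attaining property from $x_1$ back to $x_0$: $\|S'x_0\|=\|Sx_1\|=1$ in the norm case, and $|\langle S'x_0,x_0\rangle|=|\langle Sx_1,x_1\rangle|=1$ in the numerical radius case. For the approximation estimate, Lemma \ref{ref}(iii) combined with the triangle inequality yields
\begin{equation*}
\|S'-T\|\le \|S'-S\|+\|S-T\|\le 2\|x_0-x_1\|\,\|S\|+\|S-T\|.
\end{equation*}
In the BPBp case $\|S\|=1$, so the bound is at most $2(\e/3)+\e/3=\e$. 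In the BPBp-$\nu$ case one only has $\|S\|\le 2\nu(S)=2$ (since $H$ is complex), so the bound becomes $4(\e/5)+\e/5=\e$. Either way, $S'$ witnesses the BPBpp(-$\nu$) for $\mathcal{A}$ at $x_0$.

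The proof is essentially bookkeeping; the only subtle point is remembering that in the numerical radius setting one must account for the factor $2$ relating $\|S\|$ to $\nu(S)$ in a complex Hilbert space, which is why the modulus $\eta_0$ is composed with $\e/5$ rather than $\e/3$. Everything else is immediate from the stated invariance of $\mathcal{A}$ under $\mathcal{R}_{x,y}$ and the three properties of $\mathcal{R}_{x,y}$ collected in Lemma \ref{ref}.
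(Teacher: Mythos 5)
Your proof is correct and follows essentially the same route as the paper: apply the BPBp(-$\nu$) hypothesis, then conjugate by the isometry from Lemma \ref{maria} via $\mathcal{R}_{x_0,x_1}$ to move the attaining point back to $x_0$, using the invariance of $\mathcal{A}$ and the bound $\|S\|\le 2\nu(S)$ in the numerical-radius case. The only difference is cosmetic: you rescale $\e$ up front ($\e/3$, resp.\ $\e/5$) to land exactly on $\e$, whereas the paper runs the argument with $\e$ itself and accepts the final bound $5\e$, which is equally valid.
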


\begin{proof} We give a proof for numerical radius. Let $\e > 0$ be given. By hypothesis, we can consider $\eta(\e) > 0$ such that whenever $T \in \mathcal{A}$ with $\nu(T) = 1$ and $x_0 \in S_H$ satisfy 
	\begin{equation*}
	| \langle Tx_0, x_0 \rangle| > 1 - \eta(\e),	
	\end{equation*}
there are $\widetilde{S} \in \mathcal{A}$ with $\nu(\widetilde{S}) = 1$ and $x_1 \in S_H$ such that	
\begin{equation*}
| \langle \widetilde{S} x_1 , x_1 \rangle| = 1, \ \ \ \|x_1 - x_0\| < \e \ \ \ \mbox{and} \ \ \ \|\widetilde{S} - T\| < \e.	
\end{equation*}	

Define $S := \mathcal{R}_{x_0, x_1}(\widetilde{S})$.  By hypothesis, $S \in \mathcal{A}$ and $\|\widetilde{S}\| \leq 2 \nu(\widetilde{S}) = 2$, because the numerical index of a complex Hilbert space $H$ is 1/2. It follows from Lemma \ref{ref} that $| \langle S x_0, x_0 \rangle| = 1 = \nu(S)$ and
\begin{equation*}
\|S - T\| \leq \| S - \widetilde{S}\| + \| \widetilde{S} - T\| < 4 \e + \e = 5 \e.	
\end{equation*}
\end{proof}

\section{The Bishop-Phelps-Bollob\'{a}s point property for $\mathcal{A} \subset \mathcal{L}(H)$}

In this section, we prove that a complex Hilbert $H$ satisfies the BPBpp for some classical operators defined on $H$. It worth mentioning that in Theorem \ref{propsec1} the items \ref{pp_operators}, \ref{pp_cpt_operators}, and \ref{pp_SvN_operators} can be obtained from \cite[Corollary 2.3]{ABGM} (see also \cite{DKKLM}) combined with Theorem \ref{maintheorem} due to the uniform convexity of a Hilbert space and that items \ref{pp_selfadj_operators} and \ref{pp_normal_operators} can be shown, with the aid of Theorem \ref{maintheorem}, by using the facts from \cite{GLM} and \cite{CD}, respectively. Nevertheless, in what follows, by elaborating a spectral measure technique, we shall give a proof using symbolic calculus  which will cover all these results and even more cases (see Proposition \ref{referee1}).

\begin{theorem}\label{propsec1} Let $H$ be a complex Hilbert space. Then,
\begin{enumerate}[label=(\alph*)]
\item $H$ has the BPBpp for operators.	\label{pp_operators}
\item $H$ has the BPBpp for self-adjoint operators.	\label{pp_selfadj_operators}
\item $H$ has the BPBpp for compact self-adjoint operators.	\label{pp_cpt_selfadj_operators}
\item $H$ has the BPBpp for anti-symmetric operators.	\label{pp_antisym_operators}
\item $H$ has the BPBpp for unitary operators.	\label{pp_unitary_operators}
\item $H$ has the BPBpp for normal operators.	\label{pp_normal_operators}
\item $H$ has the BPBpp for compact normal operators.	\label{pp_cpt_normal_operators}
\item $H$ has the BPBpp for compact operators. 	\label{pp_cpt_operators}
\item $H$ has the BPBpp for Schatten-von Neumann operators.	\label{pp_SvN_operators}
\end{enumerate}
\end{theorem}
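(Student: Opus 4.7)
The plan is to derive all nine items uniformly by combining Theorem \ref{maintheorem} with a single spectral-calculus construction for normal operators. The first observation is that every one of the listed classes is invariant under the isometric conjugation $\mathcal{R}_{x,y}(T) = R^*TR$ from Lemma \ref{ref}: since $R$ is a surjective isometry of a complex Hilbert space (hence unitary), the identity $(R^*TR)^* = R^*T^*R$ preserves self-adjointness, anti-symmetry, unitarity and normality, compactness is preserved trivially, and the Schatten $p$-norm is unitarily invariant. Hence, by Theorem \ref{maintheorem}, the BPBpp for each class reduces to the BPBp for that class. Items \ref{pp_operators}, \ref{pp_cpt_operators}, \ref{pp_SvN_operators} then follow immediately from the BPBp results already in the literature (uniform convexity of $H$ via \cite{DGMM} and \cite[Corollary 2.3]{ABGM} for the first two, and \cite[Theorem 4.1]{GLM} for the third), while \ref{pp_selfadj_operators} and \ref{pp_normal_operators} follow from \cite{GLM} and \cite{CD} respectively.

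The substance of the proof is therefore a single spectral construction handling the normal case at once, which simultaneously yields \ref{pp_cpt_selfadj_operators}, \ref{pp_antisym_operators}, \ref{pp_unitary_operators}, \ref{pp_cpt_normal_operators}. Given a normal $T$ with $\|T\|=1$ and $\|Tx_0\|>1-\eta$, write $T = \int_{\sigma(T)} z\, dE(z)$. From $\|Tx_0\|^2 = \int_{\sigma(T)} |z|^2\, dE_{x_0,x_0}(z)$ and the constraint $|z|\leq 1$, choosing $\eta$ small relative to fixed $\delta'>0$ forces the probability measure $E_{x_0,x_0}$ to concentrate on the annular set $A_{\delta'} := \{z\in\sigma(T) : |z|>1-\delta'\}$, so that $P := E(A_{\delta'})$ satisfies $\|(I-P)x_0\| < \varepsilon/4$. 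Next, let $f : \sigma(T) \to \mathbb{C}$ be the bounded Borel function equal to the identity on $\sigma(T)\setminus A_{\delta'}$ and to the radial projection $z \mapsto z/|z|$ on $A_{\delta'}$, and set $S := f(T)$. One checks that $S$ is normal with $\|S\|=1$, $\|S-T\| \leq \delta'$, and $\|Sx_1\|=1$ at $x_1 := Px_0/\|Px_0\|$ because $|f(z)|=1$ on the support of $E_{x_1,x_1}$. Since $f$ maps $\mathbb{R}$, $i\mathbb{R}$, and the unit circle into themselves, $S$ inherits self-adjointness, anti-symmetry or unitarity from $T$; and when $T$ is compact, Lemma \ref{lem2} says that $E(A_{\delta'})$ has finite rank, from which one reads off compactness of $S$ via the decomposition induced by $f$.

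The main obstacle I anticipate is reconciling the radial renormalization on $A_{\delta'}$ with the class-membership constraint while keeping the perturbation controlled; the decisive technical point is that $f(T) - T$ is supported on the spectral projection $P$, which is finite-rank whenever we are in one of the compact subclasses, so the deformation decomposes cleanly and the properties of the symbolic calculus transfer directly to operator level. Once the BPBp has been produced in the form $(S, x_1)$ with $x_1$ close to $x_0$, one final application of Theorem \ref{maintheorem} (via the conjugation $\mathcal{R}_{x_0,x_1}$) moves the norm-attaining vector back to $x_0$ itself, yielding the BPBpp in each of the nine cases.
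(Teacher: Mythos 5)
Your argument is correct, but it is organized quite differently from the paper's. The common core is the reduction of the BPBpp to the BPBp via Theorem \ref{maintheorem} together with the observation that all nine classes are stable under the conjugations $\mathcal{R}_{x,y}$; the paper uses exactly this reduction, and in fact remarks just before the theorem that items \ref{pp_operators}, \ref{pp_selfadj_operators}, \ref{pp_normal_operators}, \ref{pp_cpt_operators}, \ref{pp_SvN_operators} can be settled, as you do, by quoting the known BPBp results from \cite{ABGM}, \cite{DGMM}, \cite{GLM} and \cite{CD}. Where you diverge is in the construction: for the remaining (normal) subclasses you run a Cheng--Dong-type radial retraction $S=f(T)$ with $f$ equal to the identity off the annulus $A_{\delta'}=\{|z|>1-\delta'\}$ and to $z\mapsto z/|z|$ on it, and your verifications (the concentration estimate $\|(I-P)x_0\|^2\le\eta/\delta'$ from $\|Tx_0\|^2=\int|z|^2\,dE_{x_0,x_0}$, the bound $\|S-T\|\le\sup_{|z|>1-\delta'}(1-|z|)\le\delta'$, norm attainment at $x_1=Px_0/\|Px_0\|$ because $E_{x_1,x_1}$ lives on $A_{\delta'}$ where $|f|=1$, class preservation since $f$ maps $\mathbb{R}$, $i\mathbb{R}$ and the circle into themselves, and compactness via the finite rank of $E(A_{\delta'})$ from Lemma \ref{lem2}) all hold. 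The paper instead gives a single self-contained construction valid for arbitrary bounded $T$: it applies the symbolic calculus not to $T$ but to the positive part $|T|$ in the polar decomposition $T=U|T|$, taking $S=U\,|T|f(|T|)=Tf(|T|)$ with $f$ the truncated reciprocal of \eqref{f}, and checks class preservation through the commutation $Ug(|T|)=g(|T|)U$ for normal $T$ and the ideal property for the compact and Schatten classes. What the paper's route buys, and yours does not, is uniformity: one construction covers all nine items without appeal to the literature, it immediately yields the positive-operator refinements of Proposition \ref{referee1} (your $f(T)$ device is confined to normal operators, and the cited BPBp results do not track positivity), and it gives the quantitative approximation in the Schatten $p$-norm $\sigma_p$ rather than only in the operator norm. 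Your route is shorter for the four subclasses you treat directly, but you should be explicit that the literature results you invoke for \ref{pp_selfadj_operators} and \ref{pp_normal_operators} do produce operators in the same class (they do, since \cite[Theorem 2.1]{GLM} and \cite[Theorem 3.1]{CD} are stated precisely for self-adjoint and normal operators, respectively).
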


\begin{proof} 

Let $0< \eps < 1$ and $T$ be a {\it positive} operator with norm $1$ and $\| T x_0 \| > 1 -\eps^2 / 4$ for some $x_0 \in S_H$. Let $y_0 \in S_H$ be such that $\langle T x_0, y_0 \rangle > 1-\eps^2 / 4$. Since $T \geq 0$, we have that $T$ is self-adjoint and $\sigma(T) \subset [0, \infty)$; hence it follows from \cite[Theorem 2.1]{GLM} that there are a self-adjoint operator $R \in \mathcal{L}(H)$ with $\|R\| = 1$ and a vector $x_1 \in S_H$ such that 
\begin{equation*}
\langle Rx_1, x_1 \rangle = 1, \ \ \|R- T \| <\eps, \ \ \|x_0-x_1 \| <4\sqrt{\eps}, \ \ \mbox{and} \ \ \|y_0 -  x_1 \| < 4\sqrt{\eps}. 
\end{equation*}
Indeed, $R$ and $x_1$ are constructed explicitly as 
\begin{equation*}
R=E(A)  + \int_B z \, dE(z) \quad\text{and}\quad x_1 = \frac{E(A)x_0}{\|E(A)x_0\|},
\end{equation*}
where $E$ is the spectral measure of $(\sigma(T), \mathcal{B}(\sigma(T)), H)$,
\begin{equation*}
A = \{ z \in \sigma(T) : z > 1-\eps\}, \ \ \mbox{and} \ \  B = \{z \in \sigma (T) : 0\leq z \leq 1-\eps\}
\end{equation*}
(notice that, since $T \geq 0$, $A_{-} = \{z \in \sigma(T): z < -1 + \e \} = \emptyset$ and then $y_1 = x_1$ in \cite[Theorem 2.1]{GLM}). Observe that the operator $R$ can be rewritten as $R=T f(T)$, where $f:[0,1]\rightarrow [0,\infty)$ is defined as 
\begin{equation} \label{f}
f(t) =
\begin{cases}
1 \,\qquad t \in [0,1-\eps], \\
\frac{1}{t} \qquad t \in (1-\eps,1]
\end{cases}
\end{equation}
and $f(T)$ denotes the symbolic calculus for $T$. With these considerations, we can start our proof.

For a general operator $T \in \mathcal{L} (H)$ with $\|T\| = 1$, we suppose that $\|T x_0 \| > 1-\eps^2 /4$ for some $x_0 \in S_X$. Take $y_0 \in S_H$ so that $\langle Tx_0, y_0\rangle > 1 -\eps^2 /4$. Consider the factorization $T = U|T|$, where $U$ is a partial isometry. Then,  
\[
\left\langle |T| x_0, \frac{U^* y_0}{\|U^* y_0\|} \right\rangle \geq \langle |T| x_0, U^* y_0 \rangle = \langle U|T| x_0, y_0\rangle  > 1- \frac{\eps^2}{4}.
\]
By using the first part of the proof, we	 
consider the operator $|T| f(|T|),$ where $f$ is defined in \eqref{f}, and $x_1 \in S_H$ satisfying
\begin{enumerate}[label=(\roman*)]
\item $\| |T| f(|T|) \| = \langle |T| f(|T|) x_1, x_1 \rangle = 1$, 
\item $\| |T| f(|T|) - |T| \| < \eps$, 
\item $\|x_0-x_1\| < 4\sqrt{\eps}$, and  
\item $\|U^* y_0/\|U^* y_0\| -x_1\| <4 \sqrt{\eps}$.
\end{enumerate}
Now consider $S := U |T| f(|T|) = T f(|T|)$ and notice that $x_1 \in \ran  E(A) \subseteq \overline{\ran |T|}=(\ker|T|)^{\perp}$ which implies that $U^* U x_1 = x_1$. We then have
\[
\langle S x_1, U x_1 \rangle = \langle |T| f(|T|) x_1, x_1 \rangle =1
\]
which implies $\|S \| = \| Sx_1 \| = 1$. Moreover, by (ii), $\|S-T\|=\|U |T| f(|T|) - U|T|\| < \eps$. This proves that $H$ has the BPBp for operators. By Theorem \ref{maintheorem}, $H$ has the BPBpp for operators and we get \ref{pp_operators}.

Next, we claim that $S$ defined as above is self-adjoint, normal, compact, and Schatten-von Neumann, whenever $T$ is self-adjoint, normal, compact, and Schatten-von Neumann, respectively. We first show it for normal operators. If $T$ is normal, then the partial isometry $U$, which is actually unitary in this case, can be chosen so that $U |T| = |T| U$ and this implies that $U g(|T|) = g(|T|) U$ for every bounded Borel function $g$ (see, for example, \cite[section 12.24]{RUD}). Thus, 
\begin{eqnarray*}
S^* S &=& (f(|T|) T^* )(T f(|T|)) \\ 
&=& (f(|T|) T)(T^* f(|T|)) \\
&=& (f(|T|) U |T|)(f(|T|) T)^* \\
&=& (U |T| f(|T|))(U |T| f(|T|))^* 
= S S^*,
\end{eqnarray*}
so $S$ is normal. An analogous argument proves that $S$ is self-adjoint when $T$ is self-adjoint. 
Since the compact and Schatten-von Neumann operators are operator ideals, our claim is achieved.
This proves \ref{pp_selfadj_operators}, \ref{pp_normal_operators}, \ref{pp_cpt_operators}, and \ref{pp_SvN_operators} and also \ref{pp_cpt_selfadj_operators} and \ref{pp_cpt_normal_operators}. Notice that \ref{pp_antisym_operators} is just a consequence of \ref{pp_selfadj_operators} and that \ref{pp_unitary_operators} is trivial.
	
Finally, we give a result that we can approximate a Schatten-von Neumann operator $T \in S_p (H)$ not only in the operator norm but also in Schatten $p$-norm. Suppose that $\|T x_0 \| > 1-\eps^2 /4$ for some $x_0 \in S_X$. By \cite[Theorem 4.1]{GLM}, ${S} = U |T| f(|T|) \in S_p (H)$ and $x_1 \in S_H$ satisfy 
	\begin{equation*}
		\|{S}\| = \|{S} x_1\| = 1, \ \ \ \|x_1 - x_0\| < \beta(\e), \ \ \ \mbox{and} \ \ \ \sigma_p ({S}-T) < 2\eps M,
	\end{equation*}
	where $\sigma_p (T) \leq M$ and $T=U|T|$ is the polar decomposition of $T$. 
	By Lemma \ref{maria}, there is a surjective isometry $R$ such that $R(x_0) = x_1$ and $\| R- \Id_H\| = \|x_0 - x_1\| < \beta(\e)$. Define $\widetilde{S} = {S} \circ R$. Since Schatten norms are isometrically invariant, $\sigma_p (\widetilde{S}) = \sigma_p ({S}\circ R) = \sigma_p({S})$, and $\widetilde{S} \in S_p (H)$. Moreover, $\| \widetilde{S} x_0 \| = \|({S} \circ R) (x_0) \| = \|{S} x_1 \| = 1$. Since $\| \widetilde{S} \| = \|{S} \circ R\| = \|{S}\|$, we obtain that $\| \widetilde{S} \| = \| \widetilde{S} x_0 \| = 1$. Finally, by using H\"{o}lder's inequality, we get that
	\begin{eqnarray*} 
		\sigma_p (T - \widetilde{S}) = \sigma_p (T - {S} \circ R)
		&\leq& \sigma_p (T - {S}) + \sigma_p({S} - {S} \circ R) \\ 
		&\leq& \sigma_p (T-{S}) + \sigma_p ({S}(\Id_H - R)) \\
		&\leq& 2 \eps M + \sigma_p ({S}) \| \Id_H - R \|  \\ 
		&\leq& 2 \eps M + (\sigma_p (T) + \sigma_p ({S}-T)) \beta(\eps) \\
		&<& 2\eps M + (1 + 2\eps )M \beta(\eps).
	\end{eqnarray*}	
	Notice from the monotonicity property \eqref{monotonicity} of Schatten $p$-norm that $\|T-\widetilde{S} \| < 2\eps M + (1 + 2\eps )M \beta(\eps)$ automatically. 
\end{proof}

Let us notice the following about Theorem \ref{propsec1}. From the first part of the proof, we have that when $T \in \mathcal{L}(H)$ is a positive operator with norm $1$ and $\| T x_0 \| > 1 - \eps^2/ 4$ for some $x_0 \in S_H$, there exists a self-adjoint operator $R=T f(T)  \in \mathcal{L}(H)$ which attains the norm at $x_1 \in S_H$ with $\|x_1 - x_0 \| < 4\sqrt{\eps}$ and satisfies $\|R- T\| < \eps$. 
Note that 
\begin{align*}
\langle Rx, x \rangle = \langle E(A) x, x \rangle + \left\langle \int_B z \, dE(z) x, x \right\rangle,
\end{align*} 
for every $x \in H$, where $A = \{ z \in \sigma(T) : z > 1-\eps \}$ and $B = \{z \in \sigma(T) : 0 \leq z \leq 1-\eps\}$. Since $E(A)$ is a self-adjoint projection (so, the set function $E_{x,x}$ is a positive measure on Borel subsets of $\sigma(T)$), 
\[
\langle E(A) x, x \rangle = \| E(A) x \|^2 \geq 0 \quad \text{and}\quad  \int_B z \, dE_{x,x} (z) \geq 0. 
\]
It follows that $R$ is a positive operator. Therefore, if we start with a positive operator (resp. positive Schatten-von Neumann operator), then we end up with another positive operator (resp. positive Schatten-von Neumann operator). It is clear that the operator $R=T f(T)$ above is compact whenever $T$ is compact positive and that $S = U|T|f(|T|)$ is self-adjoint and normal whenever $T$ is self-adjoint and normal, respectively.

Thus, to sum it up, we have the following result.

\begin{prop} \label{referee1} Let $H$ be a complex Hilbert space.
\begin{enumerate}[label=(\alph*)]
\item $H$ has the BPBpp for positive operators.
\item $H$ has the BPBpp for positive Schatten-von Neumann operators. 
\item $H$ has the BPBpp for compact positive operators.  
\item $H$ has the BPBpp for self-adjoint Schatten-von Neumann operators. 
\item $H$ has the BPBpp for normal Schatten-von Neumann operators. 
\end{enumerate} 
\end{prop}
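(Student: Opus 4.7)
The plan is to combine the spectral-calculus construction from the proof of Theorem \ref{propsec1} with the BPBp-to-BPBpp transfer principle of Theorem \ref{maintheorem}. First I would establish the BPBp for each of the five classes by re-examining the approximating operator $S = U|T|f(|T|)$ produced in that proof (with $U|T|$ the polar decomposition of $T$ and $f$ the bounded Borel function from \eqref{f}); then I would verify the invariance hypothesis $\mathcal{R}_{x,y}\mathcal{A} \subseteq \mathcal{A}$ so that Theorem \ref{maintheorem} applies.

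For the BPBp step, the key point (essentially recorded in the paragraph preceding the proposition) is that the construction $S = U|T|f(|T|)$ respects each of the five structural constraints. When $T$ is positive one has $|T| = T$ and $U$ acts as the identity on $\overline{\ran T}$, so $S$ reduces to $Tf(T) = E(A) + \int_B z\, dE(z)$, which satisfies $\langle Sx, x\rangle \geq 0$ for every $x \in H$ by the spectral computation recalled just before the proposition, making $S$ positive. When $T$ is self-adjoint or normal, the proof of Theorem \ref{propsec1} has already verified that $S$ inherits the corresponding property. Finally, $f(|T|)$ is a bounded operator (its norm is at most $1/(1-\varepsilon)$), so both the Schatten-von Neumann class $S_p(H)$ and the ideal of compact operators are preserved under multiplication by the factor $f(|T|)$; hence these properties transfer from $T$ to $S$. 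Altogether, each of the five classes enjoys the BPBp.

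To upgrade to BPBpp via Theorem \ref{maintheorem}, one needs $\mathcal{R}_{x,y}(T) = R^*TR$ to preserve each class, where $R$ is the surjective isometry from Lemma \ref{maria}. Since $R$ is unitary on the complex Hilbert space $H$, the conjugation $T \mapsto R^*TR$ is a $*$-automorphism of $\mathcal{L}(H)$, so it preserves self-adjointness and normality; it preserves positivity because $\langle R^*TRx, x\rangle = \langle TRx, Rx\rangle \geq 0$ whenever $T \geq 0$; and it preserves both compactness (by the ideal property) and the Schatten $p$-norms via unitary invariance $\sigma_p(R^*TR) = \sigma_p(T)$. Thus each of the five classes is closed under $\mathcal{R}_{x,y}$, and Theorem \ref{maintheorem} yields the BPBpp directly.

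The main obstacle, if there is one, is simply the bookkeeping verification that both the spectral-calculus construction and the unitary conjugation $\mathcal{R}_{x,y}$ respect every structural constraint simultaneously; once this is checked, the result is a corollary of material already in place.
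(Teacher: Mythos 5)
Your proposal is correct and follows essentially the same route as the paper: the paper likewise observes that the operator $R = Tf(T)$ (resp.\ $S = U|T|f(|T|)$) constructed in the proof of Theorem \ref{propsec1} is positive when $T$ is positive (via the spectral computation $\langle Rx,x\rangle = \|E(A)x\|^{2} + \int_{B} z\,dE_{x,x}(z) \geq 0$) and preserves compactness, the Schatten--von Neumann classes, self-adjointness, and normality, and then invokes Theorem \ref{maintheorem}. Your explicit verification that $\mathcal{R}_{x,y}$ (conjugation by a unitary) leaves each of the five classes invariant is a detail the paper leaves implicit, but it is precisely the hypothesis Theorem \ref{maintheorem} requires, so including it only strengthens the argument.
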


\section{The Bishop-Phelps-Bollob\'{a}s point property for numerical radius for $\mathcal{A} \subset \mathcal{L}(H)$}

In this section, we consider the analogue of Theorem \ref{propsec1} and Proposition \ref{referee1} for the BPBpp-$\nu$.

\begin{theorem}\label{propsec2} Let $H$ be a complex Hilbert space. Then,
\begin{enumerate}[label=(\alph*)]
\item $H$ has the BPBpp-$\nu$ for operators.	\label{nu_operators}
\item $H$ has the BPBpp-$\nu$ for self-adjoint operators.	\label{nu_selfadj_operators}
\item $H$ has the BPBpp-$\nu$ for compact self-adjoint operators \label{nu_cpt_selfadj_operators}
\item $H$ has the BPBpp-$\nu$ for anti-symmetric operators.	\label{nu_antisym_operators}
\item $H$ has the BPBpp-$\nu$ for unitary operators.	\label{nu_unitary_operators}
\item $H$ has the BPBpp-$\nu$ for normal operators.	\label{nu_normal_operators}
\item $H$ has the BPBpp-$\nu$ for compact normal operators.	\label{nu_cpt_normal_operators}
\item $H$ has the BPBpp-$\nu$ for compact operators.	\label{nu_cpt_operators}
\item $H$ has the BPBpp-$\nu$ for Schatten-von Neumann operators.	\label{nu_SvN_operators}
\end{enumerate}
\end{theorem}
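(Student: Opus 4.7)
The plan is to mirror the structure of Theorem \ref{propsec1}: for each subclass $\mathcal{A} \subset \mathcal{L}(H)$ in the list, I would first establish the BPBp-$\nu$ for $\mathcal{A}$ and then invoke Theorem \ref{maintheorem} to promote it to the BPBpp-$\nu$. The invariance hypothesis $\mathcal{R}_{x,y}\mathcal{A} \subset \mathcal{A}$ is routine in every listed case, because on a complex Hilbert space the isometry $R$ of Lemma \ref{maria} is unitary, so $\mathcal{R}_{x,y}(T) = R^* T R$ is a unitary conjugation, and this preserves self-adjointness, anti-symmetry, unitarity, normality, compactness, and every Schatten $p$-norm.

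The main work is the BPBp-$\nu$ for the normal subclasses \ref{nu_selfadj_operators}--\ref{nu_cpt_normal_operators} and \ref{nu_SvN_operators}, which I would treat by a spectral construction parallel to that of Theorem \ref{propsec1} but centered on the numerical range rather than on $|T|$. Given a normal $T$ with $\nu(T) = \|T\| = 1$ (the two coincide for normal operators on a complex Hilbert space) and $x_0 \in S_H$ with $|\langle Tx_0, x_0\rangle| > 1 - \eta$, put $\alpha = \langle Tx_0, x_0\rangle$ and $\lambda_0 = \alpha / |\alpha|$. The identity
\[
\int_{\sigma(T)} \re(\bar{\lambda_0}\, z)\, dE_{x_0, x_0}(z) \,=\, |\alpha| \,>\, 1 - \eta,
\]
combined with $\re(\bar{\lambda_0}\, z) \leq |z| \leq 1$ on $\sigma(T) \subset \overline{B(0,1)}$, forces $E_{x_0, x_0}$ to be highly concentrated on $A := \{ z \in \sigma(T) : \re(\bar{\lambda_0}\, z) > 1 - \delta\}$: a Markov-type estimate yields $E_{x_0, x_0}(A) > 1 - \eta/\delta$, while every $z \in A$ satisfies $|z - \lambda_0|^2 \leq 2 - 2\re(\bar{\lambda_0}\, z) < 2\delta$. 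I would then set
\[
S \,:=\, \lambda_0 E(A) + \int_{\sigma(T) \setminus A} z \, dE(z), \qquad x_1 \,:=\, \frac{E(A) x_0}{\|E(A) x_0\|}.
\]
By construction $S$ is normal with $S x_1 = \lambda_0 x_1$, so $|\langle S x_1, x_1\rangle| = \nu(S) = \|S\| = 1$, while $\|S - T\| = \sup_{z \in A}|\lambda_0 - z| < \sqrt{2\delta}$ and $\|x_1 - x_0\|$ is controlled by $1 - \|E(A) x_0\|$. Inspection shows that $S$ stays in the subclass of $T$ (real spectrum for self-adjoint, purely imaginary values of $f$ for anti-symmetric, unit-modulus values for unitary); compactness is preserved because $|\lambda_0| = 1$ forces $E(A)$ to have finite rank by Lemma \ref{lem2}, and the Schatten-von Neumann class is preserved because $S - T$ is a finite-rank modification. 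The same construction delivers the analogue of Proposition \ref{referee1} for the positive and positive/self-adjoint/normal Schatten-von Neumann subclasses.

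For the non-normal items \ref{nu_operators} and \ref{nu_cpt_operators}, the spectral calculus is unavailable. Since a complex Hilbert space is uniformly convex and uniformly smooth, \cite[Corollary 7]{KLM1} gives the BPBp-$\nu$ for $\mathcal{L}(H)$, and Theorem \ref{maintheorem} then delivers \ref{nu_operators}. I expect the compact case \ref{nu_cpt_operators} to be the main obstacle, since the Kim--Lee--Martin argument does not automatically produce a compact approximating operator. I would address it through the Cartesian decomposition $T = H_1 + i H_2$ into compact self-adjoint parts, apply the spectral construction above to $\re(\bar{\lambda_0}\, T) = \cos\theta_0\, H_1 + \sin\theta_0\, H_2$ (with $\lambda_0 = e^{i\theta_0}$), and patch the resulting self-adjoint numerical-radius attaining approximant back with the unchanged $\im(\bar{\lambda_0}\, T)$ through a controlled rank-preserving perturbation to obtain a compact $S$ close to $T$ that attains the numerical radius near $x_0$.
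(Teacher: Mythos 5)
Your overall strategy (establish the BPBp-$\nu$ for each class, then upgrade via Theorem \ref{maintheorem}) is exactly the paper's, and your spectral construction $S=\lambda_0E(A)+\int_{\sigma(T)\setminus A}z\,dE(z)$ is a correct and in fact rather cleaner unified treatment of the genuinely normal classes \ref{nu_selfadj_operators}--\ref{nu_cpt_normal_operators}: the Markov estimate, the bound $|z-\lambda_0|^2\le 2-2\re(\bar\lambda_0 z)<2\delta$ on $A$, the preservation of each subclass by the symbolic calculus, and the finite rank of $E(A)$ via Lemma \ref{lem2} (since $A\subset\{|z|>1-\delta\}$) all check out. The paper instead handles self-adjoint operators by rotating and symmetrizing the output of \cite[Corollary 7]{KLM1}, unitaries by composing with the isometry of Lemma \ref{maria}, and normal operators by the Cheng--Dong-style operators $N_1=\int_{\sigma(T)\setminus B(1-\sqrt{2\e})}\frac{z}{|z|}\,dE(z)$ and $N_2$; your one-shot construction replaces all of that for these items.

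There are, however, two genuine gaps. First, item \ref{nu_SvN_operators}: you file the Schatten--von Neumann operators under ``normal subclasses,'' but elements of $S_p(H)$ need not be normal, so the spectral construction is simply unavailable there; and unlike in Theorem \ref{propsec1}, the polar-decomposition trick does not transfer, because $\nu(U|T|)$ is not controlled by the numerical range of $|T|$. The paper's route is to reopen the proof of \cite[Propositions 4 and 6]{KLM1}: the approximant is $\lim_n T_n$ with $T_n=T+K_n$ and $K_n$ a finite-rank perturbation whose increments have $\sigma_p$-norm $\e^{n+1}/4^{n+1}$, so the sequence is Cauchy in $S_p(H)$ and the limit stays in $S_p(H)$. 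Second, item \ref{nu_cpt_operators}: your Cartesian-decomposition patch does not close. If you replace $\re(\bar\lambda_0 T)$ by a self-adjoint approximant $A'$ attaining its numerical radius at $x_1$ and reassemble $S=\lambda_0\bigl(A'+i\,\im(\bar\lambda_0 T)\bigr)$, then $|\langle Sx,x\rangle|^2=\langle A'x,x\rangle^2+\langle \im(\bar\lambda_0 T)x,x\rangle^2$, and maximizing the first summand at $x_1$ in no way maximizes the sum --- another unit vector can trade a slightly smaller real part for a much larger imaginary part, so $\nu(S)$ need not be attained at $x_1$ (this is precisely why the general numerical-radius problem requires the iterative Lindenstrauss--Bourgain-type argument of \cite{KLM1} rather than a one-step spectral fix). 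The correct observation, which the paper makes, is that the obstacle you anticipate is not there: the same inspection of the $\{T_n\}$ construction shows $\widetilde S-T$ is a norm-limit of finite-rank operators, hence compact, so $\widetilde S$ is compact whenever $T$ is, and \ref{nu_cpt_operators} follows from the same source as \ref{nu_operators}.
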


\begin{proof} 
Let $\e \in (0, 1)$ be given. By \cite[Corollary 7]{KLM1}, there exists $\eps \mapsto \eta(\eps)$ such that whenever $T \in \mathcal{L} (H)$ with $\nu (T) =1$ and $x_0 \in S_H$ satisfy 
\begin{equation}\label{prop4.1.1}
		|\langle Tx_0, x_0 \rangle| > 1 - \min\left\{ \eps, \eta\left(\eps\right) \right\}
	\end{equation}
there are $\widetilde{S} \in \mathcal{L}(H)$ with $\nu(\widetilde{S}) = 1$ and $x_{\infty} \in S_H$ such that
	\begin{equation*}
		|\langle \widetilde{S}x_{\infty}, x_{\infty} \rangle| = 1, \ \ \ \|x_{\infty} - x_0\| < \eps, \ \ \  \mbox{and} \ \ \ \|\widetilde{S} - T\| < \eps
	\end{equation*}
Following the proofs of \cite[Proposition 4 and Proposition 6]{KLM1}, one can observe that the operator $\widetilde{S}$ is constructed from a limit of a sequence of operators $\{T_n\}$, where 
\begin{equation}\label{Kn}
T_n = T + K_n \quad \text{and} \quad K_n = 
\alpha_1 \left(\frac{\eps}{4}\right) \langle \ \cdot \ , x_1 \rangle x_1 + \cdots \alpha_n \left(\frac{\eps}{4}\right)^n  \langle \ \cdot \ , x_n \rangle x_n
\end{equation}
for some $\alpha_1, \dots, \alpha_n$ in $S_{\mathbb{C}}$ and vectors $x_1, \dots, x_n$ in $S_{H}$. 
At the same time, the vector $x_{\infty}$ is obtained as a limit of a sequence of vectors $\{x_n\}$ satisfying 
\begin{equation}\label{Tnxn}
\lim_n \nu (T_n) = \lim_n |\langle T_n x_n, x_n \rangle|. 
\end{equation} 
It follows that $\tilde{S}$ is compact whenever $T$ is compact. 
Thus, \ref{nu_operators} and \ref{nu_cpt_operators} hold by applying Theorem \ref{maintheorem}. 

To observe \ref{nu_selfadj_operators} and \ref{nu_cpt_selfadj_operators}, we assume that the above $T \in \mathcal{L}(H)$ is a self-adjoint operator (resp. compact self-adjoint operator). 
	Since $\langle Tx_0, x_0 \rangle \in \R$, we may assume that $\langle Tx_0, x_0 \rangle > 0$ (otherwise, we would work with $-T$). For some $\theta \in \R$, we have
	\begin{equation*}	
		\langle \widetilde{S}x_{\infty}, x_{\infty} \rangle = e^{i \theta} |\langle \widetilde{S}x_{\infty}, x_{\infty} \rangle| = e^{i \theta} \in S_{\C}.
	\end{equation*}
	Set $r := \langle Tx_0, x_0 \rangle \in \R^+$. We have that $\langle (e^{-i\theta}\widetilde{S})x_{\infty}, x_{\infty} \rangle = 1$ and	 that
	\begin{equation*}
		|e^{i \theta} - r| = |\langle \widetilde{S}x_{\infty}, x_{\infty} \rangle - \langle Tx_0, x_0 \rangle| \leq \|\widetilde{S} - T\| + 2\|x_{\infty} - x_0\| < 3\eps.
	\end{equation*}
	So, 
	\begin{equation*}
		|e^{i \theta} - 1| \leq |e^{i \theta} - r| + |r - 1| < 4\eps.
	\end{equation*}
Since $\|\widetilde{S}\| \leq 2 \nu(\widetilde{S}) = 2$, we get
	\begin{equation*}
		\|\widetilde{S} - (e^{-i\theta} \widetilde{S})\| \leq |1 - e^{-i \theta}| \|\widetilde{S}\| \leq 2 | e^{i \theta}- 1|  < 8\eps,
	\end{equation*}
	which implies that
	\begin{equation*}
		\|(e^{-i \theta} \widetilde{S}) - T\| \leq \|(e^{-i \theta} \widetilde{S}) - \widetilde{S}\| + \|\widetilde{S} - T\| < 9\eps.
	\end{equation*}
	Note that we just proved that the operator (resp. compact operator) $S' := (e^{-i \theta} \widetilde{S}) \in \mathcal{L}(H)$ satisfies 
	\begin{equation*}
		\nu(S') = \re \ \langle S'x_{\infty}, x_{\infty} \rangle = 1 \ \ \ \mbox{with} \ \ \ \|x_{\infty} - x_0\| < \e \ \ \ \mbox{and} \ \ \  \|S' - T\| < 9\e. 
	\end{equation*}
	Now define $S := \frac{S' + (S')^*}{2} \in \mathcal{L}(H)$. Then $S$ is self-adjoint (resp. compact self-adjoint), $\nu(S) = \|S\| \leq 1$, and	
	\begin{equation*}
		|\langle Sx_{\infty}, x_{\infty} \rangle| = \left| \frac{1}{2} \langle S'x_{\infty}, x_{\infty} \rangle + \frac{1}{2} \overline{\langle S'x_{\infty}, x_{\infty} \rangle} \right| = \re \ \langle S'x_{\infty}, x_{\infty} \rangle = 1. 
	\end{equation*}
	Hence, $\nu(S) = |\langle Sx_{\infty}, x_{\infty} \rangle| = 1$. Finally, since $T = T^*$, we have
	\begin{equation*}
		\|S - T\| \leq \frac{1}{2} \|S' - T \| + \frac{1}{2} \|(S')^* - T\| \\
		= \frac{1}{2} \|S' - T \| + \frac{1}{2} \|(S')^* - T^*\| < 9\e,
	\end{equation*}
which completes the proof of \ref{nu_selfadj_operators} and \ref{nu_cpt_selfadj_operators} due to Theorem \ref{maintheorem}. Note that \ref{nu_antisym_operators} follows directly from \ref{nu_selfadj_operators}. 

Next, we prove \ref{nu_SvN_operators} by approximating a given Schatten-von Neumann operator by some operator in the same class in the $p$-Schatten norm, which will imply that $H$ has the BPBpp for Schatten-von Neumann operators due to the monotonicity \hypersetup{linkcolor=blue}\eqref{monotonicity}. Indeed, suppose that $T \in S_p(H)$ with $\nu(T) = 1$ satisfies \eqref{prop4.1.1} with the same $\eps \mapsto \eta(\eps)$ for some $x_0 \in S_H$. 
	Note that the finite rank operator $K_n$ in \eqref{Kn} belongs to $S_p (H)$, so $T_n \in S_p (H)$. Also, $\sigma_p (T_{n+1} - T_n) =\eps^{n+1}/4^{n+1}$ for every $n \in \mathbb{N}$.
This shows that $\{T_n\}$ is a Cauchy sequence in $S_p (H)$, so $T_n \rightarrow T_{\infty}$ for some $T_{\infty} \in S_p (H)$ (and hence $\|T_n - T_{\infty}\| \rightarrow 0$ as well). Note that $\sigma_p (T_{\infty} - T) \leq \eps/(4-\eps) < \eps$ and from \eqref{Tnxn} that 
\[
\nu (T_{\infty}) = \lim_{n \rightarrow \infty} \nu (T_n) = \lim_n |\langle T_n x_n, x_n \rangle| = |\langle T_{\infty} x_{\infty}, x_{\infty}\rangle|.
\]

	Since 
	\[
	|1 - v(T_{\infty})| = |\nu(T) - v(T_{\infty})| \leq \|T-T_{\infty}\| < \eps,
	\]
	we have $\nu (T_{\infty}) > 1 - \eps >0$. We define $\tilde{S} = \frac{1}{v(T_{\infty})} T_{\infty} \in S_p(H)$, then $v (\tilde{S}) = |\langle \tilde{S} x_{\infty}, x_{\infty} \rangle| = 1$. 
	Since
	\begin{align*}
		\sigma_p (\tilde{S} - T_{\infty} ) = \sigma_p \left( \left( \frac{1- v (T_{\infty})}{v(T_{\infty} )} \right) T_{\infty} \right) 
		&= \left| \frac{1-v(T_{\infty} )}{v(T_{\infty})}\right| \sigma_p (T_{\infty}) \\ 
		&\leq \left(\frac{\eps}{1-\eps}\right) \sigma_p (T_{\infty}) \\
		&\leq \left(\frac{\eps}{1-\eps}\right)( \sigma_p (T) + \sigma_p (T_{\infty} - T) ) \\
		&< \left(\frac{\eps}{1-\eps}\right)( M + \eps), 
	\end{align*}  
	where $\sigma_p (T) \leq M$ for some $M > 0$,
	we obtain 
	\begin{align*}
		\sigma_p (\tilde{S} - T) \leq \sigma_p (\tilde{S} - T_{\infty} ) + \sigma_p (T_{\infty} - T) < \left(\frac{\eps}{1-\eps}\right)( M + \eps) + \eps.
	\end{align*}  
Applying Theorem \ref{maintheorem}, we finish the proof of \ref{nu_SvN_operators}.

We prove \ref{nu_unitary_operators} directly. Let $\e \in (0, 1)$ be given and $T \in \mathcal{L}(H)$ be unitary with $\nu(T) = 1$. Now pick $x_0 \in S_H$ be such that $\left| \langle Tx_0, x_0 \rangle \right|> 1 - \frac{\e^2}{2}$. Let $\theta\in\mathbb{R}$ such that $\langle Tx_0, x_0 \rangle $ = $e^{i\theta} \left|\langle Tx_0, x_0 \rangle \right| $. Then
	\begin{eqnarray*}
		\|T x_0 - e^{i\theta} x_0\|^2 &=& \|T x_0\|^2 + \|x_0\|^2 - 2 \re \ \langle Tx_0, e^{i\theta} x_0 \rangle \\
		&=&  2 - 2 \left|\langle Tx_0, x_0 \rangle \right| \\
		&<& 2-2\left( 1-\frac{\eps^2}{2}\right) = \e^2.
	\end{eqnarray*}	
So, $\|T x_0  - e^{i\theta} x_0\| < \e$.
	Since $\|T x_0\| = \|e^{i\theta} x_0\| = 1$, by Lemma \ref{maria} there is a surjective linear isometry $R \in \mathcal{L}(H)$ which maps $T x_0$ to $e^{i\theta} x_0$ and $\|R - \Id_H\| < \e$. Let us notice the obvious fact that a rotation of $T$ is also unitary if $T$ is unitary. 
	Define $S := R \circ T \in \mathcal{L}(H)$. Then $S$ is unitary, $\nu(S) = \|S\| = 1$, $|\langle Sx_0, x_0 \rangle| = |\langle e^{i\theta} x_0, x_0 \rangle| = 1$, and $\|S - T\| = \| R \circ T - T\| \leq \|R - \Id_H\|\|T\| = \|R - \Id_H\| < \e$.

It remains to prove \ref{nu_normal_operators} and \ref{nu_cpt_normal_operators}. Let $\e \in (0,\frac{1}{2})$ be given. Suppose that $T \in \mathcal{L}(H)$ is a normal operator with $\|T\| = \nu(T) = 1$ and $|\langle Tx_0, x_0 \rangle| > 1-\e$ for some $x_0 \in S_H$. If $\theta \in \mathbb{R}$ is such that $\langle Tx_0, x_0 \rangle e^{i \theta} = |\langle Tx_0, x_0\rangle|$, then 
	\begin{eqnarray*}
	\|T(e^{i \theta} x_0) - x_0\|^2 &=& \langle T(e^{i\theta} x_0) - x_0, T(e^{i\theta} x_0) - x_0\rangle \\
&=& \|Tx_0\|^2 + \|x_0\|^2 - \langle T(e^{i\theta} x_0), x_0\rangle -\langle  x_0, T(e^{i \theta} x_0) \rangle \\
&<& 2 - 2(1-\e) = 2\e.
	\end{eqnarray*}
That is, $\| T(e^{i\theta} x_0) - x_0\| < \sqrt{2\eps}$. 
Let $E$ be the corresponding spectral measure of $T$ and consider the following orthogonal decomposition: $x_0 = x_1 + x_2$, where 
\[
x_1 = E(\sigma(T)\setminus B(1-\sqrt{2\e}))(x_0), ~~ x_2 = E(\sigma(T)\cap B(1-\sqrt{2\e}))(x_0)
\]
and let $N_1$ and $N_2$ be defined as 
\[
N_1 =\int_{\sigma(T)\setminus B(1-\sqrt{2\eps})} \frac{z}{|z|}\,dE(z) \quad \text{and} \quad  N_2 = \int_{\sigma(T)\cap B(1-\sqrt{2\eps})} z\,dE(z), 
\]
where $B(r)$ denotes the closed disk centered at the origin with radius $r>0$ in $\mathbb{C}$.
From \cite[Theorem 3.1]{CD}, we notice that $\|x_1\| \geq 1-\sqrt{2\e}$, $\|x_2\| \leq \sqrt[4]{2\e}$ and moreover if we let $x_\eps = x_1 /\|x_1\|$, then $\|x_\eps - x_0\| \leq \sqrt{2\e} + \sqrt[4]{2\e}$. This implies that 
	\begin{eqnarray*}
	\|T(e^{i\theta} x_{\e}) - x_{\e}\| &=& \frac{1}{\|x_1\|} \|T(e^{i\theta} x_{1}) - x_{1}\| \\
	&\leq& \frac{1}{\|x_1\|}\left( \|T(e^{i\theta} x_{0}) - x_{0}\| + \|T(e^{i\theta} x_2) - x_2\| \right) \\
	&\leq& \frac{1}{1-\sqrt{2\e}} \left( \sqrt{2\e} + 2 \sqrt[4]{2\e}\right).
	\end{eqnarray*}
Note now that 
	\[
	\| N_1 x_{\e} \|^2 = \left\langle E(\sigma(T)\setminus B(1-\sqrt{2\e})) x_{\e}, x_{\e}\right\rangle 
	= \langle x_\e, x_\e \rangle = 1,
	\]
because $x_\e$ belongs to the range of $E(\sigma(T)\setminus B(1-\sqrt{2\e}))$. From \cite[Lemma 2.4]{CD}, we see that the range space $K := \ran E(\sigma(T)\setminus B(1-\sqrt{2\e}))$ is a closed subspace of $H$. By Lemma \ref{maria}, there is a surjective isometry 
$\widetilde{R} \in \mathcal{L}(K) $ such that $\widetilde{R} x_{\e} =  N_1( e^{i\theta}x_{\e})$
and $\|\widetilde{R} - \Id_{K}\| =  \left\| x_\e - N_1 (e^{i\theta} x_{\e})\right\|,$
because $\ran N_1 \subset K$. Since $E(\sigma(T)\setminus B(1-\sqrt{2\e}))$ is a self-adjoint projection, we can observe that $ H  = K \oplus K', $
where $K' :=  \ker (E(\sigma(T)\setminus B(1-\sqrt{2\e})))$.

Let us define the operator $R \in \mathcal{L}(H)$ as $R = \widetilde{R} \oplus \Id_{K'}$, that is, $R(x+y) = \tilde{R} (x) + y$ for $x \in K$ and $y \in K'$. Since $\widetilde{R}$ is a surjective isometry, so is $R$. The adjoint $R^*$ of $R$ is given by $R^* = (\widetilde{R})^* \oplus \Id_{K'}$. We claim that the operator $R^* \circ N_1$ is also a normal operator. To see this, note first that 
\begin{equation*}
	(R^*  N_1) (R^* N_1)^* = R^* E(\sigma(T)\setminus B(1-\sqrt{2\e})) R, 
\end{equation*}	
and   
\begin{equation*}
(R^* N_1)^* (R^* N_1) = E(\sigma(T)\setminus B(1-\sqrt{2\e})).
\end{equation*}
Now, if $x \in K$, we have 
	\begin{eqnarray*}
	[R^* E(\sigma(T)\setminus B(1-\sqrt{2\e})) R](x) = R^* (Rx) = x \quad \text{and} \quad E(\sigma(T)\setminus B(1-\sqrt{2\e})) (x) = x. 
	\end{eqnarray*}
	If $x \in K'$, we have 
	\begin{eqnarray*}
	[R^* E(\sigma(T)\setminus B(1-\sqrt{2\e})) R](x) = R^* (E(\sigma(T)\setminus B(1-\sqrt{2\e})) x)  = 0, \quad E(\sigma(T)\setminus B(1-\sqrt{2\e})) (x) = 0. 
	\end{eqnarray*} 
	This observation shows that $R^* E(\sigma(T)\setminus B(1-\sqrt{2\e})) R= E(\sigma(T)\setminus B(1-\sqrt{2\e}))$ and the claim is proved.

We define the operator $S \in \mathcal{L}(H)$ by 
	\[
	S = R^*\circ N_1 + N_2.
	\]
To see that $S$ is a normal operator, it suffices to check that $R^*\circ N_1$ and $N_2$ commute with each other. Indeed,	from
	\begin{eqnarray*}
	\ran N_2 \subset \ker N_1 \quad \text{and} \quad  
	\ran R^* N_1 \subset \ran  E(\sigma(T)\setminus B(1-\sqrt{2\e}))  \subset \ker N_2,
	\end{eqnarray*}
	we obtain that $(R^* N_1) N_2 =0 = N_2 (R^* N_1)$. 
	 Moreover, 
	\begin{eqnarray*}
	\|S x\|^2 &=& \left\| R^* N_1 x + N_2 x \right\|^2 \\
	&=& \left\| R^*N_1 x_1\right\|^2 + \left\| N_2 x_2  \right\|^2 \\
	&\leq& \|x_1\|^2 + \|x_2\|^2 = \|x\|^2 
	\end{eqnarray*}
	for $x = x_1 + x_2 \in K \oplus K'$, because $\ran  R^*N_1 \subset K$ and $\ran N_2 \subset K'$. This implies that $\|S\|\leq 1$. Now, note that 
	\[
	|\langle Sx_{\e}, x_{\e}\rangle| = \left| \left\langle R^* N_1 x_{\e}, x_{\e} \right\rangle \right| 
	= \left| \left\langle N_1 x_{\e}, R x_{\e} \right\rangle \right| = 1.  
	\]
	This shows that $\nu(S) \geq 1$; hence $\|S\|=\nu(S)=1$. To assert that $S$ is the desired normal operator, it only remains to show that $S$ is close to $T$. Indeed, 
	\begin{eqnarray*}
	\| S - T \| &=& \left\|  R^* N_1  - \int_{\sigma(T)\setminus B(1-\sqrt{2\e})} z \, dE(z) \right\| \\
	&\leq& \| \widetilde{R} - \Id_{K} \| + \left\|  \int_{\sigma(T)\setminus B(1-\sqrt{2\e})} \left(\frac{z}{|z|}  -  z\right) \, dE(z)\right\| \\
	&\leq& \| \widetilde{R} - \Id_{K} \| + \sqrt{2\e}, 
	\end{eqnarray*}
	because $|z/|z| -z| \leq \sqrt{2\e}$ for all $z \in \sigma(T)\setminus B(1-\sqrt{2\e})$. Since 
	\begin{eqnarray*}
	\| \widetilde{R} - \Id_{K} \| &=& \left\| x_\e - N_1 (e^{i \theta} x_{\e}) \right\| \\
	&\leq& \| x_{\e} - T(e^{i \theta} x_{\e}) \| + \left\| T(e^{i \theta} x_{\e} ) - N_1 (e^{i \theta} x_{\e}) \right\|\\
	&\leq& \frac{1}{1-\sqrt{2\e}} \left( \sqrt{2\e} + 2 \sqrt[4]{2\e}\right) + \left\| \left( \int_{\sigma(T)\setminus B(1-\sqrt{2\e})} \left(z - \frac{z}{|z|}\right) \, dE(z)\right) (e^{i\theta} x_{\e}) \right\| \\
	&\leq& \frac{1}{1-\sqrt{2\e}} \left( \sqrt{2\e} + 2 \sqrt[4]{2\e}\right) + \sqrt{2\e}, 
	\end{eqnarray*} 
	we conclude that 
	\begin{eqnarray*}
	\|S-T\| \leq \frac{1}{1-\sqrt{2\e}} \left( \sqrt{2\e} + 2 \sqrt[4]{2\e}\right) + 2\sqrt{2\e}.
	\end{eqnarray*} 
	In summary, we construct the normal operator $S$ and $x_{\e} \in S_H$ satisfying:
	\begin{equation*}
\nu(S) = |\langle S x_{\e}, x_{\e}\rangle| = 1,\ \ \ \ \|x_{\e} - x_0\| \leq \sqrt{2\e}+\sqrt[4]{2\e}, \ \ \ \mbox{and} \ \ \ \|S - T\| \leq \frac{1}{1-\sqrt{2\e}} \left( \sqrt{2\e} + 2 \sqrt[4]{2\e}\right) + 2\sqrt{2\e}.
	\end{equation*}
Therefore, \ref{nu_normal_operators} follows again by using Theorem \ref{maintheorem}.

To prove \ref{nu_cpt_normal_operators}, we only need to show that the operator $S$ in the proof of \ref{nu_normal_operators} is compact when $T$ is compact and normal. To prove that $S$ is compact, since $S = R^*\circ N_1 + N_2$, it suffices to show that $N_1$ and $N_2$ are compact. Recall that 
\begin{equation*} 
	N_1 = \int_{\sigma(T)\setminus B(1-\sqrt{2\eps})} \frac{z}{|z|} \, dE(z) \quad \mbox{and} \quad 
	N_2 = \int_{\sigma(T) \cap B(1-\sqrt{2\eps})} z\,dE(z)
\end{equation*}
and observe from Lemma \ref{lem2} that 
\begin{equation*}
\ran N_1 \subset \ran E(\sigma(T) \setminus B(1-\sqrt{2\eps})) 
\end{equation*}
is of finite dimension. Thus, $N_1$ is compact. To see that $N_2$ is compact, 
we let $0< \eps' < 1-\sqrt{2\eps}$ be given. Now note that 
	\begin{align*} 
		\int_{\sigma(T) \cap B(1-\sqrt{2\eps})} z \, dE(z) - \left(\int_{\sigma(T) \cap B(1-\sqrt{2\eps})} z \, dE(z) \right) E(\Delta_{\eps'}) &= \int_{\sigma(T)} z \, \chi_{B(1-\sqrt{2\eps})} (z) \, \chi_{B(\eps')} (z) \, dE(z) \\\\
		&=\int_{\sigma(T)} z \chi_{B(\eps')} (z) \, dE(z), \nonumber
	\end{align*}
	where $\Delta_{\epsilon'} = \{z\in\sigma(T):|z|>\eps'\}.$
It follows that 
	\begin{align*} 
		\left\| \int_{\sigma(T) \cap B(1-\sqrt{2\eps})} z \, dE(z) - \left(\int_{\sigma(T) \cap B(1-\sqrt{2\eps})} z \, dE(z) \right) E(\Delta_{\epsilon'}) \right\| &=\left\|\int_{\sigma(T)} z \chi_{B(\eps')} (z) \, dE(z) \right\| 
		\leq \eps'.
	\end{align*}
Since $0< \eps'< 1-\sqrt{2\eps}$ is arbitrary and $(\int_{\sigma(T) \cap B(1-\sqrt{2\eps})} z \, dE(z)) E(\Delta_{\eps'})$ is a finite rank operator, we conclude that $N_2$ is compact.
\end{proof}

As in Proposition \ref{referee1}, we would like to get more information from Theorem \ref{propsec2}. Notice first that the operator $\widetilde{S}$ which appears in the first part of the proof of Theorem \ref{propsec2} is obtained from a limit of a sequence of operators $\{T_n\}$ (see \eqref{Kn}). Moreover, the argument used in the proof of \cite[Proposition 4]{KLM1} allows us 
to choose such $\alpha_1, \dots, \alpha_n$ to be $1$ when we start with the assumption that $T$ is positive. 
Thus we have that 
	\begin{eqnarray*}
	\langle K_n x, x \rangle &=& \left\langle  \left( \frac{\eps}{4}\right) \langle x, x_1 \rangle x_1 + \cdots  \left( \frac{\eps}{4}\right)^n \langle x, x_n \rangle x_n, x \right\rangle  \\\\
	&=&  \left( \frac{\eps}{4}\right) |\langle x, x_1 \rangle |^2 + \cdots +  \left( \frac{\eps}{4}\right)^n |\langle x, x_n \rangle |^2 \geq 0 
	\end{eqnarray*} 
for every $x \in H$, so $T_n$ is a positive operator. It follows that $\widetilde{S}$ is a positive operator which satisfies  
	\begin{equation*}
		\langle \widetilde{S}x_{\infty}, x_{\infty} \rangle = 1, \ \ \ \|x_{\infty} - x_0\| < \eps, \ \ \  \mbox{and} \ \ \ \|\widetilde{S} - T\| < \eps.
	\end{equation*}
This also shows that the operator $T_{\infty}$ that appears in the proof of item \ref{nu_SvN_operators} of Theorem \ref{propsec2} is positive. On the other hand, we can argue as in \ref{nu_selfadj_operators} and \ref{nu_cpt_selfadj_operators} of Theorem \ref{propsec2} to get the last two items of the following result.

\begin{prop} \label{referee2} Let $H$ be a complex Hilbert space.
\begin{enumerate}[label=(\alph*)]
	\item $H$ has the BPBpp-$\nu$ for positive operators.
	\item $H$ has the BPBpp-$\nu$ for positive Schatten-von Neumann operators. 
	\item $H$ has the BPBpp-$\nu$ for compact positive operators.  
	\item $H$ has the BPBpp-$\nu$ for self-adjoint Schatten-von Neumann operators. 
	\end{enumerate}
\end{prop}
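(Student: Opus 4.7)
The plan is to piggyback on the constructions already carried out in the proof of Theorem \ref{propsec2}, verifying in each case that the additional structural constraint (positivity, Schatten-von Neumann, compactness, self-adjointness) is preserved both by the explicit approximating operator produced there and by the transitivity map $\mathcal{R}_{x,y}$, so that Theorem \ref{maintheorem} can be invoked to pass from the BPBp-$\nu$ to the BPBpp-$\nu$. The bulk of the work has in fact been done in the paragraph preceding the statement; what remains is to organise the stability checks item-by-item.

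For items (a) and (b), the preceding discussion has already verified that, when $T$ is positive (resp.\ positive and in $S_p(H)$), each partial sum $T_n = T+K_n$ in \eqref{Kn} is positive because one may take $\alpha_j=1$ and then $\langle K_n x,x\rangle\ge 0$. Passage to the limit in the operator (resp.\ $\sigma_p$) norm gives a positive $\widetilde{S}$ (resp.\ positive $T_\infty\in S_p(H)$, and $T_\infty/\nu(T_\infty)$ is again positive). Since $\mathcal{R}_{x,y}(T)=R^*TR$ with $R$ a surjective isometry satisfies $\langle R^*TRz,z\rangle=\langle T Rz,Rz\rangle\ge 0$ and $\sigma_p(R^*TR)=\sigma_p(T)$, the classes of positive operators and of positive Schatten-von Neumann operators are closed under $\mathcal{R}_{x,y}$, so Theorem \ref{maintheorem} yields the BPBpp-$\nu$ in both cases.

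For item (c), positivity of $\widetilde{S}$ is inherited from (a). Compactness follows because each $K_n$ in \eqref{Kn} is a finite-rank operator, so $\{K_n\}$ (and hence $\{T_n\}$) is Cauchy in the operator norm, forcing $\widetilde{S}-T=\lim(K_n)$ to be a norm limit of compact operators and therefore compact; thus $\widetilde{S}$ is compact and positive. Compactness is preserved under $\mathcal{R}_{x,y}$ (compact operators form a two-sided ideal), so Theorem \ref{maintheorem} applies. For item (d), start with a self-adjoint $T\in S_p(H)$ with $\nu(T)=1$ and use the Schatten-$p$ approximation of Theorem \ref{propsec2}\ref{nu_SvN_operators} to obtain $\widetilde{S}\in S_p(H)$ with $|\langle\widetilde{S}x_\infty,x_\infty\rangle|=\nu(\widetilde{S})=1$ and $\sigma_p(\widetilde{S}-T)$ small. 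Because $\langle Tx_0,x_0\rangle\in\mathbb{R}$ we may assume it is positive, pick $\theta$ as in the proof of \ref{nu_selfadj_operators} so that $\langle(e^{-i\theta}\widetilde{S})x_\infty,x_\infty\rangle=1$ and $|e^{i\theta}-1|$ is small, and symmetrise by setting $S:=\tfrac12\bigl(e^{-i\theta}\widetilde{S}+(e^{-i\theta}\widetilde{S})^*\bigr)$. Then $S$ is self-adjoint, lies in $S_p(H)$ because Schatten norms are adjoint-invariant, and using $T=T^*$ one bounds $\sigma_p(S-T)\le\tfrac12\sigma_p(e^{-i\theta}\widetilde{S}-T)+\tfrac12\sigma_p((e^{-i\theta}\widetilde{S})^*-T^*)$ exactly as in the norm argument of \ref{nu_selfadj_operators}. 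Since self-adjointness and membership in $S_p(H)$ are both preserved by $\mathcal{R}_{x,y}=R^*(\cdot)R$, Theorem \ref{maintheorem} again upgrades BPBp-$\nu$ to BPBpp-$\nu$.

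The step I expect to require the most care is item (d), where one has to propagate the $\sigma_p$-estimate through two successive modifications (the Schatten approximation of \ref{nu_SvN_operators} and the rotation-plus-symmetrisation of \ref{nu_selfadj_operators}) and verify that the rotation constant $e^{-i\theta}$ and the subsequent averaging do not destroy the Schatten bound; all remaining verifications are routine ideal-/invariance-type checks.
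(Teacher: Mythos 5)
Your argument is correct and follows essentially the same route as the paper: take $\alpha_j=1$ in the construction \eqref{Kn} to keep each $T_n$ positive, pass to the limit (and normalise by $\nu(T_\infty)$) for (a)--(c), use the finite rank of the $K_n$ for compactness and Schatten membership, and symmetrise $e^{-i\theta}\widetilde{S}$ for (d), finishing with Theorem \ref{maintheorem}. The only difference is that you make explicit the invariance of each class under $\mathcal{R}_{x,y}$, a hypothesis of Theorem \ref{maintheorem} that the paper leaves implicit; this is a welcome, not a divergent, addition.
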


Comparing Proposition \ref{referee2} with Proposition \ref{referee1}, we see that it is missing the Bishop-Phelps-Bollob\'as point property for numerical radius for normal Schatten-von Neumann operators. Since this result requires a little more of effort, we highlight it in the next proposition followed by its proof. 

\begin{prop}
A complex Hilbert space $H$ has the BPBpp-$\nu$ for normal Schatten-von Neumann operators. 
 \end{prop}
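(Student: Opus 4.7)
The plan is to re-run the spectral construction from the proof of item (f) of Theorem \ref{propsec2} for a normal operator $T$ and to verify that, if $T$ additionally belongs to $S_p(H)$, then the resulting operator $S$ also lies in $S_p(H)$. Once this is done, Theorem \ref{maintheorem} upgrades the resulting BPBp-$\nu$ to the BPBpp-$\nu$, because the class of normal Schatten--von Neumann operators is invariant under $T \mapsto R^* T R$ for any surjective isometry $R$ (such $R$ is unitary, so $R^*TR$ stays normal, and Schatten $p$-norms are unitarily invariant).

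Given $\e \in (0, 1/2)$ and a normal $T \in S_p(H)$ with $\nu(T) = 1$ and $|\langle Tx_0, x_0\rangle| > 1 - \eta(\e)$ at some $x_0 \in S_H$, where $\eta(\e)$ is the threshold produced in the proof of Theorem \ref{propsec2}(f), let $E$ be the spectral measure of $T$ and define, exactly as in that proof, the operators
\[
N_1 = \int_{\sigma(T)\setminus B(1-\sqrt{2\e})} \tfrac{z}{|z|}\,dE(z),\qquad N_2 = \int_{\sigma(T)\cap B(1-\sqrt{2\e})} z\,dE(z),
\]
and the surjective isometry $R$ on $H$ that extends the isometry $\widetilde R$ on $K := \ran E(\sigma(T)\setminus B(1-\sqrt{2\e}))$ by the identity on $K^\perp$. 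Set $S = R^*\circ N_1 + N_2$. The argument in Theorem \ref{propsec2}(f) already produces a unit vector $x_\e$ with $|\langle Sx_\e, x_\e\rangle|=\nu(S)=1$, explicit control of $\|x_\e - x_0\|$ and $\|S-T\|$ in terms of $\e$, and the normality of $S$.

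The key new verification is that $S \in S_p(H)$. Since $T$ is compact (as $T\in S_p(H)$) and normal, Lemma \ref{lem2} implies that $E(\sigma(T)\setminus B(1-\sqrt{2\e}))$ is of finite rank. Hence $N_1$, and therefore $R^*\circ N_1$, is of finite rank. Writing
\[
N_2 = T - \int_{\sigma(T)\setminus B(1-\sqrt{2\e})} z\,dE(z)
\]
exhibits $N_2$ as the difference between $T \in S_p(H)$ and a finite rank operator, so $N_2 \in S_p(H)$; consequently $S = R^*\circ N_1 + N_2 \in S_p(H)$, giving BPBp-$\nu$ for normal Schatten--von Neumann operators, and Theorem \ref{maintheorem} then completes the proof. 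The main (mild) obstacle is exactly this confinement of the correction $S-T$ to the finite-dimensional spectral subspace $K$ of $T$; should one additionally want Schatten $p$-norm approximation (in the spirit of Theorem \ref{propsec2}(i)), the estimate $\dim K \leq \sigma_p(T)^p/(1-\sqrt{2\e})^p$, valid because each eigenvalue of $T$ outside $B(1-\sqrt{2\e})$ contributes at least $(1-\sqrt{2\e})^p$ to $\sigma_p(T)^p$, yields a bound on $\sigma_p(S-T)$ depending only on $\e$ and $\sigma_p(T)$.
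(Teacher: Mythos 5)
Your argument is correct and follows the same overall strategy as the paper: both take the operator $S=R^*\circ N_1+N_2$ already built in the proof of items \ref{nu_normal_operators} and \ref{nu_cpt_normal_operators} of Theorem \ref{propsec2}, check that it stays in the class of normal Schatten--von Neumann operators, and then invoke Theorem \ref{maintheorem} (the class is indeed stable under $T\mapsto R^*TR$ for unitary $R$, as you note). Where you genuinely diverge is in the verification that $S\in S_p(H)$: you use Lemma \ref{lem2} to see that $E(\sigma(T)\setminus B(1-\sqrt{2\e}))$ has finite rank for compact normal $T$, so that $S-T$ is a finite-rank perturbation supported on $K$ and membership in $S_p(H)$ is immediate; the paper instead factors $N_1$ and the correction term through $T$ and applies the Schatten--H\"older inequality $\sigma_p(AB)\leq\|A\|\,\sigma_p(B)$, obtaining directly the explicit bound $\sigma_p(S-T)\leq \bigl(\tfrac{1}{1-\sqrt{2\e}}(\sqrt{2\e}+2\sqrt[4]{2\e})+\sqrt{2\e}\bigr)\tfrac{M}{1-\sqrt{2\e}}+\tfrac{M\sqrt{2\e}}{1-\sqrt{2\e}}$ with $M\geq\sigma_p(T)$. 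Your route is more elementary and fully sufficient for the stated proposition, since Definition \ref{def} only asks for operator-norm closeness; the paper's route buys, as a bonus, approximation in the Schatten $p$-norm itself. Your closing remark recovers essentially the same bonus: from $\dim K\leq\sigma_p(T)^p/(1-\sqrt{2\e})^p$ and $\sigma_p(F)\leq(\operatorname{rank}F)^{1/p}\|F\|$ one gets $\sigma_p(S-T)\leq\tfrac{M}{1-\sqrt{2\e}}\,\|S-T\|$, a bound of the same order in $\e$ and $M$ as the paper's.
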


\begin{proof}
Let $T$ be a normal Schatten-von Neumann operator with $\nu (T) = \|T\| = 1$ and $x_0 \in S_H$ be such that $|\langle Tx_0, x_0\rangle| > 1-\eps$. Suppose that $\sigma_p (T) \leq M$ for some positive number $M > 0$. Let $S = R^*\circ N_1 + N_2$, where $R, N_1$, and $N_2$ are the operators defined in the proof of \ref{nu_normal_operators} and \ref{nu_cpt_normal_operators} of Theorem \ref{propsec2}. Observe that 
\begin{align*}
\sigma_p (S - T) &= \sigma_p \left( R^*N_1 - \int_{\sigma(T)\setminus B(1-\sqrt{2\eps})} z \, dE(z) \right) \\
&\leq \sigma_p \left( R^* N_1 - N_1 \right) + \sigma_p \left( \int_{\sigma (T) \setminus B(1-\sqrt{2\eps})}  \left(\frac{1}{|z|} - 1 \right) z \, dE(z) \right) \\
&\leq \|\widetilde{R} - \Id_K \| \sigma_p (N_1) + \sigma_p \left( \int_{\sigma (T) \setminus B(1-\sqrt{2\eps})}  \left(\frac{1}{|z|} - 1 \right) z \, dE(z) \right).
\end{align*}
By definition of $N_1$, we have that 
\begin{eqnarray*}
\sigma_p (N_1) &=& \sigma_p \left( \int_{\sigma(T)\setminus B(1-\sqrt{2\eps})} \frac{z}{|z|} \, dE(z) \right) \\
&=& \sigma_p \left( \left(\int_{\sigma(T)} z \, dE(z) \right) \left( \int_{\sigma(T)} \frac{1}{|z|} \chi_{\Delta_{\eps}} \, dE(z) \right)  \right) \\
&\leq& \left\| \int_{\sigma(T)} \frac{1}{|z|} \chi_{\Delta_{\eps}} \, dE(z)  \right\| \sigma_p(T) \\
&\leq& \frac{M}{1-\sqrt{2\eps}},
\end{eqnarray*}
where $\Delta_{\eps} = \{z\in\sigma(T):|z|> 1-\sqrt{2\epsilon} \}.$ Similarly, we can see that 
\begin{eqnarray*}
 \sigma_p \left( \int_{\sigma (T) \setminus B(1-\sqrt{2\eps})}  \left(\frac{1}{|z|} - 1 \right) z \, dE(z) \right) &\leq& \left\| \int_{\sigma(T)} \left( \frac{1-|z|}{|z|} \right) \chi_{\Delta_{\eps}} \, dE(z) \right\| \sigma_p (T) \\
 &\leq& \frac{M\sqrt{2\eps}}{1-\sqrt{2\eps}}.
\end{eqnarray*} 
It follows, in particular, that $S$ is a normal Schatten-von Neumann operator and
\[
\sigma_p (S-T) \leq \left(\frac{1}{1-\sqrt{2\e}} \left( \sqrt{2\e} + 2 \sqrt[4]{2\e}\right) + \sqrt{2\e}\right) \frac{M}{1-\sqrt{2\eps}} + \frac{M\sqrt{2\eps}}{1-\sqrt{2\eps}}. 
\] 
\end{proof}



			

\proof[Acknowledgements]

The authors would like to thank Miguel Mart\'in for kindly answering some inquiries about this topic. Also, they wish to express their gratitude to the anonymous referees for the careful reading of the manuscript and for his/her suggestions.

\end{document}